\theoremstyle{definition}
\newtheorem{theorem}{Theorem}[section]
\newtheorem{lemma}[theorem]{Lemma}
\newtheorem{remark}[theorem]{Remark}
\newtheorem{definition}[theorem]{Definition}
\newtheorem{corollary}[theorem]{Corollary}
\newtheorem{example}[theorem]{Example}
\newtheorem{proposition}{Proposition}[section]
\newcommand{\cd}{\mathrm{cd}}
\title{Relative cohomological dimension of a relatively hyperbolic pair}
\author{Harsh Patil}
\date{\vspace{-5ex}}
\begin{document}
\maketitle
\begin{abstract}
    We show that the relative cohomological dimension $\cd_R(G,H)$ of a relatively hyperbolic pair $(G,H)$ is always finite when $G$ does not contain $R$-torsion. We also show that this dimension is preserved under quasi-isometries, provided that $G$ is torsion-free and the peripheral subgroup $H$ is unconstricted and of type $F_{\infty}$.  
    As a corollary of our methods, we compute $\cd_{\mathbb{Z}}(G,H)$ in several cases.
\end{abstract}

\section{Introduction}
A relatively hyperbolic group is a finitely generated group equipped with a word metric which satisfies properties similar to the fundamental group of a cusped hyperbolic manifold. Relatively hyperbolic groups were introduced by Gromov in his monograph on hyperbolic groups \cite{Gromov}. Farb [Far94], in his PhD thesis, developed the theory of relative
hyperbolicity using an alternative definition, which is the one we work
with here. Farb’s definition (definition 2.5) is equivalent to that of
Gromov. Alternative characterizations were given by Bowditch \cite{Bowditch} and Yaman \cite{asliyaman}.
Examples of relatively hyperbolic groups include fundamental groups of cusped hyperbolic manifolds, non-cocompact lattices in Lie groups of real rank 1 and geometrically finite convergence groups acting on non-empty perfect
compact metric spaces \cite{asliyaman}.
The problem of classifying relatively hyperbolic groups up to quasi-isometry has attracted significant attention. By a result of Schwartz \cite{Schwartz}, non-uniform lattices of isometries of a rank-one symmetric space are known to be quasi-isometrically rigid: if a group $G$ is quasi-isometric to a non-uniform lattice in a rank-one Lie group $\Gamma$, then $G$ is a finite extension of a non-uniform lattice in $\Gamma$. 
Drutu and Sapir \cite{DRUTU2005959} proved that if a group $G$ is relatively hyperbolic with respect to a finite collection of unconstricted subgroups (cf. below Theorem \ref{qi_theorem} for the definition of an unconstricted group) and $G'$ is another group quasi-isometric to $G$, then $G'$ is also relatively hyperbolic. Furthermore, every peripheral subgroup of $G'$ is quasi-isometric to some peripheral subgroup of $G$. Thus, the property of being relatively hyperbolic with respect to a fixed set of unconstricted peripheral subgroups is quasi-isometrically invariant. Groff \cite{groff} builds on this result to show that if two relatively hyperbolic groups are quasi-isometric, then the corresponding Bowditch boundaries are homeomorphic. 

In this article, we study the relative cohomology $H^{i}(G,H;M)$ of a relatively hyperbolic group $G$ relative to its peripheral subgroup $H$ with coefficients in a $G$-module $M$. Our primary motivation behind this is the following result by Manning and Wang \cite{Manning2018CohomologyAT}, which relates the Bowditch boundary $\partial_{B}(G,H)$ of $G$ with respect to $H$ with the relative cohomology $H^{i}(G,H;RG)$:

\begin{theorem}\label{manningwang}
    Let $(G,H)$ be a relatively hyperbolic pair, such that both $G$ and $H$ are of type $F_{\infty}$. Let $R$ be any ring. Then, for every $i$, there is an isomorphism of $RG$-modules
    $$H^{i}(G,H;RG)\cong 
    \check{H}^{i-1}(\partial_{B}(G,H);R). $$
\end{theorem}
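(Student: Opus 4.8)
\section*{Proof proposal}

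The plan is to run a relative analogue of the Bestvina--Mess argument, using the Groves--Manning cusped space as the geometric model whose Gromov boundary is the Bowditch boundary. Write $H=\{P_1,\dots,P_k\}$ for the finite collection of peripheral subgroups. Using that each $P_j$ and $G$ are of type $F_\infty$, I would first fix cocompact $P_j$-CW models for $EP_j$ with finite skeleta, together with a classifying construction for $G$ having finite skeleta modulo $G$, and assemble from these a contractible, locally finite $G$-CW complex $X$: a thickened cusped space obtained by gluing, along each peripheral coset $gP_j$, a combinatorial horoball built over the chosen $P_j$-model. Type $F_\infty$ is exactly what lets this be arranged so that $X$ is contractible, $G$ acts properly with finitely many orbits of cells in each dimension, and $X$ is quasi-isometric to the standard cusped space; in particular $X$ is Gromov-hyperbolic with $\partial X\cong\partial_B(G,H)$. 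I would record the decomposition into a $G$-cocompact closed core $Y$ (carrying the $G$-model, including the horoball bases) and the open union of deep horoballs $\mathcal O=X\setminus Y=\bigsqcup_{gP_j}\mathcal O_{gP_j}$, permuted by $G$ with $\mathcal O_{gP_j}$ invariant under $gP_jg^{-1}$.

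Next I would form the compactification $\overline X=X\cup\partial_B(G,H)$ and verify that it is a $Z$-compactification, i.e.\ that $\overline X$ is a compact contractible ANR in which $\partial_B(G,H)$ is a $Z$-set. This is the relatively hyperbolic incarnation of the Bestvina--Mess theorem, proved by the usual visual/nerve estimates now applied to the hyperbolic cusped space. Granting this, $\overline X$ has trivial reduced \v{C}ech cohomology, so the long exact sequence of the pair $(\overline X,\partial_B)$ collapses to give, $G$-equivariantly,
$$\check H^{i-1}(\partial_B(G,H);\mathbb Z)\;\cong\;\check H^{i}(\overline X,\partial_B;\mathbb Z)\;\cong\;H^{i}_c(X;\mathbb Z),$$
the last isomorphism being the standard identification of relative \v{C}ech cohomology modulo a $Z$-set with the compactly supported cohomology of the open complement. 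The $G$-module structure on both sides comes from the $G$-actions on $X$ and on $\partial_B(G,H)$, and the isomorphisms are natural.

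The heart of the argument, and the step I expect to be the main obstacle, is the $G$-equivariant identification $H^i_c(X;\mathbb Z)\cong H^i(G,H;\mathbb ZG)$: the failure of cocompactness is concentrated in the horoballs, which correspond to the peripheral cosets, so $H^*_c$ should see $G$ \emph{relative to} $H$. I would feed the partition $X=Y\cup\mathcal O$ into the long exact sequence of compactly supported cohomology for the closed subspace $Y$ and open complement $\mathcal O$,
$$\cdots\to H^i_c(\mathcal O)\to H^i_c(X)\to H^i_c(Y)\xrightarrow{\partial}H^{i+1}_c(\mathcal O)\to\cdots,$$
and match it against the Bieri--Eckmann sequence. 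Since $Y$ is a cocompact model with $G$ acting freely, $H^i_c(Y)\cong H^i(G;\mathbb ZG)$ degreewise. Each open deep horoball $\mathcal O_{gP_j}$ is modeled, up to proper homotopy, on the product of a cocompact classifying space for $P_j$ with a half-line, so a suspension computation for compactly supported cohomology gives $H^i_c(\mathcal O_{gP_j})\cong H^{i-1}(P_j;\mathbb ZP_j)$; assembling the $G$-orbit of horoballs of each type (and using that $P_j$ is $FP_\infty$, so $H^{i-1}(P_j;-)$ commutes with the direct sum $\bigoplus_{P_j\backslash G}\mathbb ZP_j=\mathbb ZG$) yields $H^i_c(\mathcal O)\cong\bigoplus_j H^{i-1}(P_j;\mathbb ZG)$. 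With these inputs the displayed sequence becomes term-by-term identical, connecting maps included, to
$$\cdots\to\textstyle\bigoplus_j H^{i-1}(P_j;\mathbb ZG)\to H^i(G,H;\mathbb ZG)\to H^i(G;\mathbb ZG)\xrightarrow{\mathrm{res}}\bigoplus_j H^i(P_j;\mathbb ZG)\to\cdots,$$
and constructing a comparison ladder between the two sequences followed by the five lemma forces $H^i_c(X;\mathbb Z)\cong H^i(G,H;\mathbb ZG)$. Combined with the previous paragraph this proves the theorem.

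The two delicate points where I expect the real work to lie are: (i) establishing that $\overline X$ is genuinely a $Z$-compactification by $\partial_B(G,H)$, the relative Bestvina--Mess step; and (ii) making the horoball computation $H^i_c(\mathcal O_{gP_j})\cong H^{i-1}(P_j;\mathbb ZP_j)$ and the comparison of long exact sequences rigorous at the cochain level, since the combinatorial horoballs are non-compact and the product description is only a proper-homotopy statement that must be extracted from their explicit structure, and since the comparison map between the geometric and the Bieri--Eckmann sequences has to be produced (not merely an abstract degreewise isomorphism). The finiteness supplied by type $F_\infty$ for both $G$ and $H$ is precisely what keeps the relevant cochain modules finitely generated in each degree and lets cohomology commute with the direct sums appearing above, so that these identifications go through.
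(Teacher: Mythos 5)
The first thing to say is that the paper does not prove this statement at all: Theorem \ref{manningwang} is quoted, with citation, from Manning and Wang \cite{Manning2018CohomologyAT}, and it enters the paper purely as an external input, used (together with Theorem \ref{groff} and Corollary \ref{main_corollary}) in the proof of Theorem \ref{qi_theorem}. So there is no in-paper proof to compare your argument against; the meaningful comparison is with the original proof in \cite{Manning2018CohomologyAT}. Measured against that source, your outline follows essentially the same route that Manning--Wang take: a cusped space assembled from finite-type classifying spaces for $G$ and for the peripherals with combinatorial horoballs glued along peripheral cosets, a relative Bestvina--Mess argument showing that the compactification by the Bowditch boundary is a $Z$-set compactification, the resulting identification $\check{H}^{i-1}(\partial_{B}(G,H);\mathbb{Z})\cong H^{i}_{c}(X;\mathbb{Z})$, and finally the matching of $H^{\ast}_{c}(X)$ with the Bieri--Eckmann relative cohomology $H^{\ast}(G,H;\mathbb{Z}G)$ via the decomposition of $X$ into a cocompact core and horoballs. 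Your sketch is therefore the right blueprint, but the two ``delicate points'' you flag are exactly where the bulk of the original paper lives, so what you have is an architecture, not a proof.

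Beyond the gaps you acknowledge, there are two subtleties you should not treat as routine. First, since $G$ and $H$ are only assumed to be of type $F_{\infty}$, the space $X$ is in general infinite-dimensional and not locally finite, hence not metrizable; this interferes both with the ANR/$Z$-set formalism (Bestvina--Mess is usually stated for finite-dimensional or at least metrizable situations) and with what ``$H^{\ast}_{c}$'' means. Note also that $G$ does \emph{not} act on $X$ with finitely many orbits of cells in each dimension --- the horoballs alone contain infinitely many orbits of horizontal edges --- which is precisely why $H^{\ast}_{c}(X)$ sees the relative, rather than the absolute, cohomology; so $H^{\ast}_{c}(X)$ must be taken as honest finitely supported cellular cochains, while the identification $H^{i}_{c}(Y)\cong H^{i}(G;\mathbb{Z}G)$ for the truncated core $Y$ uses that $Y$ \emph{is} cocompact in each dimension, acts on freely, and is contractible (or at least acyclic); this last point is not immediate from contractibility of $X$ and needs an argument, e.g.\ a Mayer--Vietoris comparison against the horoballs. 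Second, your horoball computation $H^{i}_{c}(\mathcal{O}_{gP_{j}})\cong H^{i-1}(P_{j};\mathbb{Z}P_{j})$ is asserted via a proper homotopy equivalence $\mathcal{O}_{gP_{j}}\simeq EP_{j}\times\mathbb{R}$, which is dubious as stated for combinatorial horoballs (whose deeper levels are progressively coarsened); the safer derivation is to prove $H^{\ast}_{c}(\text{closed horoball})=0$ and then use the long exact sequence of the pair with the level-zero set, which yields the same shift. Neither issue breaks the strategy --- both are engineered around in \cite{Manning2018CohomologyAT} --- but they are genuine mathematical content, not bookkeeping.
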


The right-hand side, $\check{H}^{i-1}(\partial_{B}(G,H);R)$, denotes the \v{C}ech cohomology of the Bowditch boundary (with coefficients in $R$), which is a topological invariant. Thus, Theorem \ref{manningwang} provides an algebraic approach to understanding the topology of the Bowditch boundary. 
The main result of this article is as follows (see Section \ref{section_cohomology of_a_pair} for the definition of the $RG$-module $\Delta_{G/H}^{R}$):

\begin{theorem}\label{maintheorem1} 
Let $G$ be a group and let $H < G$ such that $G$ is hyperbolic relative to $H$ and such that $G$ does not contain $R$-torsion. Then the $RG$-module $\Delta^{R}_{G/H}$ admits a finite-length projective resolution of $RG$-modules in which each non-trivial term is finitely generated; i.e., it is a module of type $FP$ over $RG$. Consequently, $\textrm{cd}_{R}(G, H)$ is finite.
\end{theorem}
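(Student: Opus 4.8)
The plan is to produce the required resolution geometrically, by building a contractible model for the pair and passing to relative cellular chains. Recall from Section \ref{section_cohomology of_a_pair} that $\Delta_{G/H}$ is the kernel of the augmentation $\mathbb{Z}[G/H]\to\mathbb{Z}$, and that $\cd(G,H)$ is finite as soon as $\Delta_{G/H}$ has finite projective dimension over $\mathbb{Z}G$ (indeed the relative cohomology of the pair is computed by $\mathrm{Ext}^{\ast}_{\mathbb{Z}G}(\Delta_{G/H},-)$ up to a degree shift). So it suffices to exhibit a finite-length resolution of $\Delta_{G/H}$ by finitely generated free $\mathbb{Z}G$-modules. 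The overall strategy is to realize $\Delta_{G/H}$ as the (shifted) homology of the relative chain complex of a pair $(Y,\mathcal{Y})$, where $Y$ is a contractible free $G$-complex and $\mathcal{Y}$ is a disjoint union of contractible \emph{peripheral} subcomplexes indexed by $G/H$.

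First I would construct the model. Since $G$ is torsion-free and hyperbolic relative to $H$, $G$ acts freely on the Groves--Manning cusped space $X=X(G,H,S)$, a Gromov-hyperbolic graph obtained from a Cayley graph of $G$ by attaching a combinatorial horoball $\mathcal{B}_{gH}$ to each coset $gH\in G/H$. Passing to a high-scale Rips complex $Y=P_{d}(X)$ yields a contractible complex carrying a free $G$-action, inside which the horoball subcomplexes $Y_{gH}=P_{d}(\mathcal{B}_{gH})$ are contractible, pairwise disjoint, permuted by $G$ exactly as $G/H$, and have stabilizers the conjugates $gHg^{-1}$. Set $\mathcal{Y}=\bigsqcup_{gH}Y_{gH}$. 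The essential point is that although $Y$ and $\mathcal{Y}$ may be infinite-dimensional (this is forced whenever $\cd H=\infty$), all of that complexity is absorbed into the horoballs: the cells of $Y$ lying outside the deep horoball region come from the bounded-geometry, cocompact Cayley part of $X$, so they occupy only finitely many dimensions and finitely many $G$-orbits.

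Granting this, the relative cellular chain complex $C_{\ast}(Y,\mathcal{Y})=C_{\ast}(Y)/C_{\ast}(\mathcal{Y})$ is a \emph{finite-length} complex of \emph{finitely generated free} $\mathbb{Z}G$-modules. Its homology is read off from the long exact sequence of the pair: contractibility of $Y$ and of each component of $\mathcal{Y}$ forces $H_{n}(Y,\mathcal{Y})=0$ for $n\neq 1$, while in degree one the sequence collapses to the short exact sequence $0\to H_{1}(Y,\mathcal{Y})\to\mathbb{Z}[G/H]\to\mathbb{Z}\to 0$. Hence $H_{1}(Y,\mathcal{Y})\cong\Delta_{G/H}$ and all other homology vanishes. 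A bounded complex of finitely generated free modules whose only nonzero homology is $\Delta_{G/H}$ yields, by a standard truncation-and-splitting argument (the bottom cycle module is a direct summand of a finitely generated free module, hence finitely generated projective), a finite-length resolution of $\Delta_{G/H}$ by finitely generated projectives; this shows $\Delta_{G/H}$ is of type $FP$ and has finite projective dimension, whence $\cd(G,H)<\infty$. With a little more care the projective end term can be absorbed to make the resolution free, as claimed.

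The main obstacle is the geometric input of the second paragraph: constructing $Y$ together with the family $\{Y_{gH}\}$ so that each $Y_{gH}$ is contractible while the complementary ``thick'' part is simultaneously cocompact and of bounded dimension. This is precisely where relative hyperbolicity does the work. Contractibility of $Y$ and of the horoball pieces rests on the fact that the Rips complex of a geodesic hyperbolic space is contractible at large scale, applied both to $X$ and to each combinatorial horoball; the finite-dimensionality of the complement requires controlling the local geometry of $X$ away from the deep horoball region, where the valence is bounded in terms of the Rips parameter even though it blows up as one descends into a horoball. Making the splitting of $Y$ into thick part and horoballs precise---in particular choosing the depth cutoff so that distinct $Y_{gH}$ are genuinely disjoint and the relative chains are finitely generated free over $\mathbb{Z}G$---is the technical heart of the argument.
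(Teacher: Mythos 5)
Your proposal is correct, and its homological endgame coincides exactly with the paper's: construct a pair of simplicial complexes with a free $G$-action in which the ambient complex is acyclic, the peripheral subcomplex has contractible components indexed by $G/H$, and every simplex not lying in the peripheral part has uniformly bounded size and falls into finitely many $G$-orbits; then the relative chain complex is a finite-length complex of finitely generated free $\mathbb{Z}G$-modules, and the long exact sequence of the pair identifies its only nonzero homology with $\ker(\mathbb{Z}[G/H]\to\mathbb{Z})=\Delta_{G/H}$. (Two simplifications: since every vertex lies in the peripheral part, the relative $C_{0}$ vanishes, so the homology sits in bottom degree as the cokernel of $\partial_{2}$ and the augmented relative complex is already a free resolution---your truncation-and-splitting step and the worry about a projective end term are unnecessary; likewise no depth cutoff is needed to make the $Y_{gH}$ disjoint, as distinct horoballs already have disjoint vertex sets.) Where you genuinely differ from the paper is the geometric model. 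The paper never passes to the cusped space: it builds a relative Rips complex $\tilde{X}_{r,d}$ with vertex set $G$ itself, with edges given by relative geodesics whose coset penetration is controlled by the BCP constant, and with peripheral subcomplex $\tilde{Y}$ the union of \emph{full simplices} on the cosets $gH$; there contractibility of the peripheral pieces is trivial, the hard step is acyclicity of $\tilde{X}_{r,d}$ (imported from the author's earlier work, where cycles are pushed into single cosets), and finiteness of the relative chains follows because a simplex meeting two cosets has word-metric diameter at most $6dr$. Your model instead outsources the hard geometry to standard results---hyperbolicity of the Groves--Manning cusped space, contractibility of Rips complexes of hyperbolic spaces (which needs no local finiteness), and the depth estimate showing that a simplex meeting two horoballs lies at depth at most $2d$, hence in a cocompact, uniformly locally finite region---at the cost of the horoball details you flag (convexity of horoballs, or a push-to-depth contraction, to get contractibility of the induced subcomplexes $Y_{gH}$). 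One precision: torsion-freeness is not what makes the vertex action on the cusped space free (that holds for any $G$); it is what forces setwise simplex stabilizers in the Rips complex to be trivial, which is exactly what makes the relative chain modules free over $\mathbb{Z}G$, and it plays the identical role in the paper's complex. A conceptual benefit of your route is that it runs inside the same cusped-space framework as Manning--Wang, which makes transparent why torsion-freeness can substitute for their peripheral type-$FP$ hypothesis.
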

Recall that, for a given ring $R$ and a group $G$, $G$ is said to have \emph{no $R$-torsion} if the order of any finite subgroup of $G$ is invertible in $R$. 
\begin{remark}
    The relative cohomological dimension was already known to be finite under the  assumption that the peripherals are all of type $FP$ (cf. \cite[Theorem 5.1]{Manning2018CohomologyAT}). We relax this condition in Theorem \ref{maintheorem1} and only demand that $G$ contain no $R$-torsion. 
\end{remark}

The following corollary is an easy consequence of Theorem \ref{maintheorem1}:
\begin{corollary}\label{cd_corollary}
    Let $G$ be a torsion-free group such that $G$ is hyperbolic relative to a subgroup $H$. Then, $G$ has finite cohomological dimension if and only if $H$ does. 
\end{corollary}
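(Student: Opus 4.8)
The plan is to derive everything from the short exact sequence of $\mathbb{Z}G$-modules
$$0 \to \Delta_{G/H} \to \mathbb{Z}[G/H] \to \mathbb{Z} \to 0$$
defining $\Delta_{G/H}$, the second map being the augmentation. Applying $\mathrm{Ext}^{\ast}_{\mathbb{Z}G}(-,M)$ for an arbitrary $\mathbb{Z}G$-module $M$ and using the Eckmann--Shapiro lemma to identify $\mathrm{Ext}^{n}_{\mathbb{Z}G}(\mathbb{Z}[G/H],M)\cong\mathrm{Ext}^{n}_{\mathbb{Z}H}(\mathbb{Z},M)=H^{n}(H;M)$ (because $\mathbb{Z}[G/H]=\mathbb{Z}G\otimes_{\mathbb{Z}H}\mathbb{Z}$ is induced from the trivial $\mathbb{Z}H$-module), I obtain a long exact sequence
$$\cdots \to H^{n}(G;M)\to H^{n}(H;M)\to \mathrm{Ext}^{n}_{\mathbb{Z}G}(\Delta_{G/H},M)\to H^{n+1}(G;M)\to\cdots$$
whose middle term controls $\cd(G,H)$.

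The argument is then the usual ``two out of three'' for a long exact sequence: if two of the three quantities $\cd(G)$, $\cd(H)$ and $\mathrm{pd}_{\mathbb{Z}G}(\Delta_{G/H})$ are finite, then for $n$ beyond both finite bounds two consecutive terms of the sequence vanish for every coefficient module $M$, and exactness forces the third quantity to be finite as well. The crucial input is Theorem \ref{maintheorem1}, which makes $\mathrm{pd}_{\mathbb{Z}G}(\Delta_{G/H})<\infty$ (equivalently $\cd(G,H)<\infty$) available as soon as $G$ is torsion-free.

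Granting that input, both directions follow quickly. If $\cd(G)<\infty$ then $\cd(H)\le\cd(G)<\infty$ already by monotonicity of cohomological dimension under passage to subgroups, so the long exact sequence is not even needed here. Conversely, if $\cd(H)<\infty$, then combining this with $\cd(G,H)<\infty$ from Theorem \ref{maintheorem1} and feeding both into the two-out-of-three principle yields $\cd(G)<\infty$.

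I expect the real point to lie in securing the torsion-freeness of $G$ that Theorem \ref{maintheorem1} demands in the backward direction. In the forward direction there is nothing to do, since finite cohomological dimension already forces torsion-freeness; but in the converse we begin only from $\cd(H)<\infty$, and Theorem \ref{maintheorem1} cannot be invoked until $G$ is known to be torsion-free. That torsion-freeness of $G$, rather than merely of $H$, is the operative condition is not a formality: a group such as $\mathbb{Z}/2\ast\mathbb{Z}^{2}$, which is hyperbolic relative to the torsion-free peripheral $\mathbb{Z}^{2}$, has $\cd(H)=2$ yet $\cd(G)=\infty$. I would therefore need the standing assumptions to guarantee that $G$ itself is torsion-free, after which the long exact sequence closes the argument.
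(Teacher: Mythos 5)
Your proof is correct and is essentially the paper's own argument: the forward direction is monotonicity of cohomological dimension under passage to subgroups, and the backward direction feeds $\cd(G,H)<\infty$ (Theorem \ref{maintheorem1}) and $\cd(H)<\infty$ into the long exact sequence of the pair, exactly as the paper does; your Shapiro-lemma derivation of that sequence from $0\to\Delta_{G/H}\to\mathbb{Z}[G/H]\to\mathbb{Z}\to 0$ is precisely the content of Proposition \ref{LES_pair}, which the paper cites rather than re-derives. Your worry about torsion-freeness is also justified, and it points at a defect in the statement rather than in your proof: the corollary as printed hypothesizes only that $H$ is torsion-free, but the paper's own proof begins ``Let $G$ be a torsion-free group\ldots'', i.e.\ it silently replaces the printed hypothesis by the one Theorem \ref{maintheorem1} actually needs. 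Your counterexample is valid: $G=\mathbb{Z}/2\ast\mathbb{Z}^{2}$ is hyperbolic relative to $H=\mathbb{Z}^{2}$ (the finite free factor is hyperbolic and can be dropped from the peripheral structure), $\cd(H)=2<\infty$, yet $\cd(G)=\infty$ since $G$ has torsion. So the corollary is false as literally written and must be read with the hypothesis ``$G$ torsion-free''; granting that reading, your argument is complete and coincides with the paper's.
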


We also prove the following corollary, which will be used in the proof of Theorem \ref{qi_theorem}: 
\begin{corollary}\label{main_corollary}
Let $G$ be a torsion-free group and $H<G$ be a subgroup such that $G$ is relatively hyperbolic with respect to $H$. Then,
    $\cd_{R}(G,H)=\sup\{i|H^{i}(G,H;RG)\neq 0\}$.
\end{corollary}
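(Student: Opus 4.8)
The plan is to adapt Brown's classical detection argument for the (absolute) cohomological dimension of an $FP$ group to the relative setting, the essential new input being that $\Delta_{G/H}$ is of type $FP$ over $\mathbb{Z}G$, which is exactly the content of Theorem~\ref{maintheorem1}. Throughout I use the Bieri--Eckmann identification $H^{i}(G,H;M)\cong \mathrm{Ext}^{\,i-1}_{\mathbb{Z}G}(\Delta_{G/H},M)$ together with the definition $\cd(G,H)=\sup\{i\mid H^{i}(G,H;M)\neq 0 \text{ for some }\mathbb{Z}G\text{-module }M\}$. One inequality is immediate: since $\mathbb{Z}G$ is itself a $\mathbb{Z}G$-module, $\sup\{i\mid H^{i}(G,H;\mathbb{Z}G)\neq 0\}\le \cd(G,H)$. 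By Theorem~\ref{maintheorem1} the number $N:=\cd(G,H)$ is finite; as the defining set is a set of non-negative integers bounded above, the supremum is attained, so there is a module $M_{0}$ with $H^{N}(G,H;M_{0})\neq 0$. It therefore suffices to prove $H^{N}(G,H;\mathbb{Z}G)\neq 0$, since this forces the reverse inequality and hence equality. (The degenerate case $\Delta_{G/H}=0$, i.e. $H=G$, is handled separately, both sides being trivial.)

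For the reverse inequality I first promote the nonvanishing from the arbitrary module $M_{0}$ to a free module. By maximality of $N$ the functor $H^{N}(G,H;-)=\mathrm{Ext}^{\,N-1}_{\mathbb{Z}G}(\Delta_{G/H},-)$ is right exact, because $H^{N+1}(G,H;-)$ vanishes identically: for any short exact sequence $0\to M'\to M\to M''\to 0$ of coefficients, the long exact sequence of the pair terminates as $H^{N}(G,H;M)\to H^{N}(G,H;M'')\to H^{N+1}(G,H;M')=0$. Presenting $M_{0}$ as a quotient of a free module via $0\to M_{1}\to F\to M_{0}\to 0$ and feeding this into the long exact sequence yields a surjection $H^{N}(G,H;F)\twoheadrightarrow H^{N}(G,H;M_{0})\neq 0$. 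Hence $H^{N}(G,H;F)\neq 0$ for this free $\mathbb{Z}G$-module $F$.

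The final step reduces from an arbitrary free module to $\mathbb{Z}G$, and this is precisely where Theorem~\ref{maintheorem1} is indispensable. Write $F=\bigoplus_{\alpha}\mathbb{Z}G$. Because $\Delta_{G/H}$ is of type $FP$, it admits a resolution $P_{\bullet}$ by finitely generated free $\mathbb{Z}G$-modules, and for each $j$ the functor $\mathrm{Hom}_{\mathbb{Z}G}(P_{j},-)$ commutes with arbitrary direct sums; since direct sums are exact, passing to cohomology of the complex $\mathrm{Hom}_{\mathbb{Z}G}(P_{\bullet},-)$ shows that $\mathrm{Ext}^{\bullet}_{\mathbb{Z}G}(\Delta_{G/H},-)$ commutes with direct sums in the coefficient variable. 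Therefore $H^{N}(G,H;F)\cong \bigoplus_{\alpha}H^{N}(G,H;\mathbb{Z}G)$, and the left-hand side being nonzero forces $H^{N}(G,H;\mathbb{Z}G)\neq 0$, which completes the proof.

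The main obstacle is the interchange of $\mathrm{Ext}$ with infinite direct sums in the last paragraph: this fails for a general first argument and is licensed only by the finite generation of the terms in a finite free resolution of $\Delta_{G/H}$, supplied by Theorem~\ref{maintheorem1}; without it one can only detect $\cd(G,H)$ by free modules, not by $\mathbb{Z}G$ alone. By contrast, the right-exactness reduction is formal once one knows that $N$ is finite and attained.
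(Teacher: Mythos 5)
Your proposal is correct and follows essentially the same route as the paper: your right-exactness/long-exact-sequence step promoting nonvanishing from $M_{0}$ to a free module is exactly the paper's Lemma \ref{free_module} (built on Lemma \ref{LES}), and your interchange of $\mathrm{Ext}_{\mathbb{Z}G}(\Delta_{G/H},-)$ with direct sums, licensed by the type-$FP$ conclusion of Theorem \ref{maintheorem1}, is exactly the paper's use of Lemma \ref{commute}. The only differences are cosmetic: you prove the direct-sum interchange inline rather than citing \cite[Prop.~4.6]{brown2012cohomology}, and you explicitly dispose of the degenerate case $H=G$.
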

\begin{remark}
    Again, the conclusion of Corollary \ref{main_corollary} was already known to be true under the assumption that all peripheral subgroups are of type $FP$ \cite[Prop. 2.24]{Manning2018CohomologyAT}. We show that  the conclusion holds for a much larger collection of relatively hyperbolic pairs. 
\end{remark}

Finally, we prove the following theorem that allows us to distinguish certain relatively hyperbolic groups up to quasi-isometry:
\begin{theorem}\label{qi_theorem}
    Let $G$ and $G'$ be two quasi-isometric groups that are relatively hyperbolic with respect to subgroups $H<G$ and $H'<G'$, respectively. Assume further that neither $G$ nor $G'$ contain $R$-torsion. Assume that both $H$ and $H'$ are unconstricted and of type $F_{\infty}$. Then, $\cd_{R}(G,H)=\cd_{R}(G',H')$. 
\end{theorem}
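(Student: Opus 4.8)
The plan is to convert $\cd(G,H)$ into a topological invariant of the Bowditch boundary $\partial_B(G,H)$, and then to exploit the fact that quasi-isometric relatively hyperbolic groups with unconstricted peripherals have homeomorphic boundaries.

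The hypotheses are arranged precisely so that the two cohomological inputs already established become available. Since $H$ is of type $F_\infty$ and $G$ is hyperbolic relative to $H$, the group $G$ is itself of type $F_\infty$, as a relatively hyperbolic group inherits the finiteness property $F_\infty$ from its peripheral subgroups; the same holds for the pair $(G',H')$. Thus Theorem \ref{manningwang} applies to both pairs, and combining its isomorphisms with Corollary \ref{main_corollary}, which is available because $G$ and $G'$ are torsion-free, I would obtain
$$\cd(G,H) = \sup\{\, i \mid \check{H}^{i-1}(\partial_B(G,H);\mathbb{Z}) \neq 0 \,\} = 1 + \sup\{\, j \mid \check{H}^{j}(\partial_B(G,H);\mathbb{Z}) \neq 0 \,\},$$
together with the identical expression for $(G',H')$. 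Both suprema are finite by Theorem \ref{maintheorem1}.

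It then suffices to prove that $\partial_B(G,H)$ and $\partial_B(G',H')$ are homeomorphic, since \v{C}ech cohomology is a homeomorphism invariant and the two suprema would thereby coincide. To this end I would first invoke the theorem of Drutu and Sapir: as $G$ and $G'$ are quasi-isometric and relatively hyperbolic, the quasi-isometry induces a correspondence between the peripheral subgroups, each peripheral of $G$ being quasi-isometric to one of $G'$. The unconstricted hypothesis on $H$ and $H'$ is exactly what makes this correspondence rigid enough to use: unconstricted subgroups are not absorbed into the relatively hyperbolic structure and are detected intrinsically through the absence of cut points in their asymptotic cones, so the peripheral structures are genuinely matched by the quasi-isometry rather than merely quasi-isometric piece by piece. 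With a peripheral-structure-preserving quasi-isometry established, Groff's theorem yields a homeomorphism $\partial_B(G,H) \cong \partial_B(G',H')$.

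Combining this homeomorphism with the displayed formula finishes the argument: one has $\check{H}^{j}(\partial_B(G,H);\mathbb{Z}) \cong \check{H}^{j}(\partial_B(G',H');\mathbb{Z})$ for every $j$, so the suprema agree and $\cd(G,H) = \cd(G',H')$. The step I expect to be the main obstacle is the promotion of the Drutu--Sapir peripheral correspondence to a quasi-isometry satisfying the exact hypotheses of Groff's theorem: one must verify that the unconstricted assumption forces the quasi-isometry to respect the peripheral structure conjugacy class by conjugacy class, so that the boundary homeomorphism genuinely follows rather than only a quasi-isometry of the boundaries.
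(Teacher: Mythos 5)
Your proposal is correct and follows essentially the same route as the paper: Manning--Wang together with Corollary \ref{main_corollary} (using torsion-freeness and the implicit fact that $G$ inherits type $F_\infty$ from $H$) reduce $\cd(G,H)$ to the \v{C}ech cohomology of the Bowditch boundary, and the quasi-isometry is promoted to a quasi-isometry of pairs before applying Theorem \ref{groff}. The step you flag as the main obstacle is precisely the paper's Proposition 4.1, which carries it out via Drutu--Sapir: images of peripheral cosets land in bounded neighborhoods of cosets, and the uniformly bounded diameter of intersections of neighborhoods of distinct pieces in an asymptotically tree-graded space pins down the correspondence and bounds the Hausdorff distance.
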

A group is said to be \textit{unconstricted} if one of its asymptotic cones is without cut-points. Any group that satisfies a law is unconstricted. For more examples cf. \cite[pg. 6]{DRUTU2005959}. 

\begin{remark}
  The cohomological dimension $\cd_{R}(G,H)$ is not an entirely new quasi-isometry invariant for relatively hyperbolic groups. Rather, its quasi-isometry invariance follows from the fact that the Bowditch boundaries of two quasi-isometric pairs are homeomorphic, in combination with Theorem \ref{manningwang}. While describing the Bowditch boundary entirely or calculating the relative cohomology may seem difficult, we demonstrate with examples that $\cd_{\mathbb{Z}}(G,H)$ can be computed explicitly in some cases.
\end{remark}
Throughout the paper, we restrict ourselves to the case of a single peripheral subgroup. The analogous statements for multiple peripheral subgroups are stated in Section 6.  
\section{Preliminaries}
 A \textit{group pair} consists of a group $G$ and a subgroup $H<G$. It is denoted by $(G,H)$. 
\subsection{Relative Hyperbolicity}
We recall some standard notions from the theory of relatively hyperbolic groups.
The interested reader can refer to \cite{farb1998relatively}, \cite{Bowditch}, and \cite{osin2006relatively}. 
\begin{definition}
Let $G$ be a finitely generated group and $H$ be a subgroup of $G$. Let $S$ be some finite generating set of $G$. Let $\Gamma(G,S)$ denote the Cayley graph of $G$ with respect to $S$. Assigning each edge unit length, the graph $\Gamma(G,S)$ becomes a metric space equipped with the path metric. 
\par We form a new graph $\hat{\Gamma}(G,S,H)$ as follows: Start with the Cayley graph $\Gamma(G,S)$ and for each coset $gH$ of $H$ add a vertex $v(gH)$ and an edge of length $\frac{1}{2}$ between $v(gH)$ and each element $gh$ of $gH$.  
\par This new graph is called the \emph{coned-off Cayley graph} of the pair $(G,H)$ with respect to the generating set $S$.  $\hat{\Gamma}(G,S,H)$ is equipped with the path metric, which we denote by $\hat{d}$. 
\end{definition}

\begin{definition}A group $G$ is said to be \emph{weakly relatively hyperbolic} with respect to a subgroup $H$ if the coned-off Cayley graph $\hat{\Gamma}(G,S,H)$ is Gromov hyperbolic. 
\end{definition}

\begin{definition} Let $G$ be a group and $H$ be a subgroup. 
   Let $c\colon [0,l]\rightarrow \Gamma(G,S)$ be a path in the Cayley graph of $G$. Its image $\hat{c}$ in the coned-off graph $\hat{\Gamma}(G,S,H)$ is constructed by replacing every maximal subpath $c|_{[a,b]}$ all of whose vertices lie in the same coset $gH$ with a path of length $1$ from $c(a)$ to $c(b)$ passing through the cone point $v(gH)$. Let $l'$ denote the length of $\hat{c}$. $c$ is said to be a \emph{relative geodesic} if $\hat{c}$ is a geodesic in $\hat{\Gamma}(G,S,H)$. The length of $\hat{c}$ is said to be the \emph{relative length} of $c$. Let $T>1$. The path $c$ is called a $T$-relative quasi-geodesic if $\hat{c}$ is a $T$-quasi-geodesic, i.e.,
   $$T^{-1}|t-t'|\leq \hat{d}(\hat{c}(t),\hat{c}(t'))\leq T|t-t'| \quad \forall t,t'  \in [0,l'] $$
   A path $c$ is said to be a \emph{relative quasi-geodesic} if it is a $T$-relative quasi-geodesic for some $T>0$.
\end{definition}

A path $c\colon [a,b]\rightarrow \Gamma(G,S)$ is said to \textit{travel in a coset $\gamma H$ for less than $r$} if for any maximal subsegment of $c$ all of whose vertices lie in $\gamma H$, the endpoints are at a distance less than $r$. A path $c$ is said to \emph{travel in a coset $\gamma H$ for more than $r$} if there exists a subsegment of $c$ with vertices in $\gamma H$ such that its endpoints are at least at a distance $r$ from each other.

\begin{definition}[Bounded Coset Penetration]
  Let $G$ be a group and $H$ be a subgroup. The pair $(G,H)$ is said to satisfy the BCP (Bounded Coset Penetration) property if for every $T>0$ there exists an $r=r_{BCP}(T)$ such that for every pair of $T$-relative quasigeodesics $c_1$, $c_2$ starting and ending at the same point in $\Gamma(G,S)$ satisfy the following two conditions: 
  \begin{enumerate}
      \item If $c_1$ travels more than $r$ in a coset, then $c_2$ enters the same coset.
      \item If $c_1$ and $c_2$ enter the same coset, then the entry points (respectively, exit points) of $c_1$ and $c_2$ are at most $r$ distance apart. 
  \end{enumerate}
\end{definition}

\begin{definition}[Relative Hyperbolicity]
A group is said to be relatively hyperbolic with respect to a subgroup $H$ if it is weakly relatively hyperbolic and satisfies the BCP property. The pair $(G,H)$ is then said to be a \textit{relatively hyperbolic pair}. 
\end{definition}
\subsection{Quasi-isometries of Groups and Pairs}
Let $G_{1}$ and $G_{2}$ be two finitely generated groups and let $H_{1}$ and $H_{2}$ be subgroups of $G_{1}$ and $G_{2}$, respectively. Let $S_{1}\subset G_{1}$ and $S_{2}\subset G_{2}$ be finite symmetric generating sets of $G_{1}$ and $G_{2}$, respectively. Let $d_{S_{1}}$ and $d_{S_{2}}$ denote the respective word metrics on $G_{1}$ and $G_{2}$. 

\begin{definition}[Quasi-isometry]
    A map $q \colon G_{1} \rightarrow G_{2}$ is said to be a quasi-isometry if 
    \begin{enumerate}
        \item there exist constants $A, B > 0$ such that 
        $$
        A^{-1} d_{S_{1}}(x, y) - B \leq d_{S_{2}}(q(x), q(y)) \leq A d_{S_{1}}(x, y) + B
        $$
        for all $x, y \in G_{1}$, and
        \item there exists a constant $C > 0$ such that for every $y \in G_{2}$, there exists $x \in G_{1}$ such that $d_{S_{2}}(q(x), y) \leq C$.
    \end{enumerate}
\end{definition}

Following \cite{Hughes_Martínez-Pedroza_2023}, we define quasi-isometries of pairs.

\begin{definition}[Quasi-isometry of pairs]
    Let $q \colon G_{1} \rightarrow G_{2}$ be a quasi-isometry between groups $G_{1}$ and $G_{2}$. Suppose, in addition, that there exist subgroups $H_{1} < G_{1}$ and $H_{2} < G_{2}$, and a constant $M > 0$ such that the following two conditions are satisfied: 
    \begin{enumerate}
        \item For any left coset $gH_{1}$ of $H_{1}$, the set 
        $$
        \{g' \in G_{2} \mid d^{\mathrm{Haus}}_{S_{2}}(q(gH_{1}), g'H_{2}) < M \}
        $$
        is non-empty.
        \item For any left coset $g'H_{2}$ of $H_{2}$, the set 
        $$
        \{g \in G_{1} \mid d^{\mathrm{Haus}}_{S_{2}}(q(gH_{1}), g'H_{2}) < M \}
        $$
        is non-empty.
    \end{enumerate}
    Then, we say that $q$ is a quasi-isometry between the pairs $(G_{1}, H_{1})$ and $(G_{2}, H_{2})$.
\end{definition}

In Section \ref{section_4}, we show that a quasi-isometry between two groups $G_{1}$ and $G_{2}$, which are relatively hyperbolic with respect to unconstricted subgroups $H_{1}$ and $H_{2}$, respectively, is automatically a quasi-isometry of pairs $(G_{1}, H_{1})$ and $(G_{2}, H_{2})$.
\subsection{Bowditch Boundary}
Let $(G,H)$ be a relatively hyperbolic pair. Then, there exists a non-empty, perfect, metrizable compactum $M$ such that $G$ admits a convergence group action on $M$, and the action satisfies three additional properties:  $M$ consists only of conical limit points and bounded parabolic points, there is exactly one orbit of bounded parabolic points, and
   the stabilizer of any bounded parabolic point is a conjugate of $H$.
Furthermore, if $M'$ is any other compactum that satisfies the above properties, then $M$ and $M'$ are $G$-equivariantly homeomorphic. Such a space is called the Bowditch boundary of the pair $(G,H)$ and is denoted by $\partial_{B}(G,H)$. We refer the reader to \cite{Manning2018CohomologyAT} for an explicit description of $\partial_{B}(G,H)$ and to \cite{asliyaman} for a verification of the properties stated above. In this article, we do not work directly with the Bowditch boundary. The two facts regarding the Bowditch boundary that we will use in this paper are Theorem \ref{manningwang} and the following theorem, which is a consequence of a result by Groff \cite[Theorem 6.3, Corollary 6.5]{groff}:

\begin{theorem}\label{groff}
Let $G_{1}$ and $G_{2}$ be finitely generated groups such that $G_{1}$ is relatively hyperbolic with respect to a subgroup $H_{1}$, and $G_{2}$ is relatively hyperbolic with respect to a subgroup $H_{2}$. Let $q \colon (G_{1},H_{1}) \rightarrow (G_{2},H_{2})$ be a quasi-isometry of pairs. Then, the Bowditch boundaries $\partial_{B}(G_{1},H_{1})$ and $\partial_{B}(G_{2},H_{2})$ are homeomorphic. 
\end{theorem}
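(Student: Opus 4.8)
The plan is to realize both Bowditch boundaries as Gromov boundaries of hyperbolic spaces attached to the pairs, to promote the quasi-isometry of pairs to a quasi-isometry between those spaces, and then to read off a homeomorphism at the level of boundaries. For each pair $(G_i,H_i)$ I would fix a finite generating set $S_i$ and pass to a hyperbolic model space $X_i$ whose Gromov boundary realizes $\partial_B(G_i,H_i)$. The convenient choice is the Groves--Manning cusped space: one glues a combinatorial horoball onto each left coset $gH_i$ inside the Cayley graph $\Gamma(G_i,S_i)$. Relative hyperbolicity of $(G_i,H_i)$ is exactly what makes $X_i$ Gromov hyperbolic, and the Gromov boundary $\partial X_i$ is $G_i$-equivariantly homeomorphic to $\partial_B(G_i,H_i)$ (this is the model used in \cite{Manning2018CohomologyAT}). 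This reduces the theorem to constructing a quasi-isometry $X_1\to X_2$ out of $q$.

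The second step is to extend $q$ to a map $\tilde q\colon X_1\to X_2$. The two Hausdorff-distance conditions in the definition of a quasi-isometry of pairs say precisely that $q$ induces a coarse bijection between the coset systems $\{gH_1\}$ and $\{g'H_2\}$ under which paired cosets remain within Hausdorff distance $M$. I would use this correspondence to send the horoball over each coset $gH_1$ into the horoball over its matched coset $g'H_2$, sending a vertex at depth $n$ to a vertex at comparable depth above the $q$-image of its base point. The explicit metric on a combinatorial horoball --- where depth contributes linearly and travel along a fixed horospherical level is logarithmically compressed --- lets one check that this extension is coarsely well defined and coarsely Lipschitz, while the symmetric Hausdorff condition supplies a coarse inverse. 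The conclusion that $\tilde q$ is a quasi-isometry is exactly Groff's Theorem 6.3, so rather than reproduce the horoball estimates I would verify that our definition of a quasi-isometry of pairs matches his hypotheses and then cite his theorem.

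To finish, I would invoke the functoriality of the Gromov boundary: a quasi-isometry between hyperbolic geodesic spaces extends to a homeomorphism between their Gromov boundaries. Applying this to $\tilde q$ and composing with the identifications $\partial X_i\cong\partial_B(G_i,H_i)$ produces the desired homeomorphism $\partial_B(G_1,H_1)\cong\partial_B(G_2,H_2)$. This packaging of the previous steps into a single statement about pairs is Groff's Corollary 6.5.

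I expect the genuine obstacle to lie in the second step, together with the fact that the model spaces $X_i$ are \emph{not proper} when $H_i$ is infinite, since each horospherical level is as large as the coset. The usual boundary functoriality for \emph{proper} geodesic hyperbolic spaces therefore does not apply verbatim, and both the horoball extension and the induced boundary map must be controlled by hand. The sensitivity of horoball geometry --- two points close inside a horoball may be far apart in the base --- means the bounded Hausdorff control furnished by a quasi-isometry of pairs must be leveraged carefully to show that $\tilde q$ respects depth coarsely and does not distort distances along horospheres. Since exactly these estimates are carried out by Groff, I would lean on Theorem 6.3 and Corollary 6.5 for the technical core and restrict my own work to the definitional bookkeeping that connects his hypotheses to the notion of a quasi-isometry of pairs used here.
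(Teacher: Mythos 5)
Your proposal is correct and takes essentially the same route as the paper: the paper proves nothing itself here, but derives the statement as a special case of Groff's Theorem 6.3 and Corollary 6.5 \cite{groff}, whose mechanism (promoting the quasi-isometry of pairs to a quasi-isometry of the cusped spaces and then passing to Gromov boundaries) is exactly what you sketch, including deferring the horoball estimates to Groff. Your caveat about non-properness of the cusped spaces is a fair observation, but since you, like the paper, lean on Groff for the technical core, nothing further is needed.
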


\begin{remark}
    The above theorem is just a special case of the more general result by Groff \cite{groff}. Groff assumes that $G_{1}$ is relatively hyperbolic with respect to a finite collection $\mathcal{A}_{1}$ of peripheral subgroups and does not assume relative hyperbolicity for $G_{2}$. Rather, he uses a result of Behrstock-Drutu-Mosher \cite[Theorem 4.8]{behrstockdrutumosher} to infer the existence of a collection $\mathcal{A}_{2}$ of subgroups of $G_{2}$ with respect to which $G_{2}$ is relatively hyperbolic. Additionally, every subgroup $H \in \mathcal{A}_{1}$ is quasi-isometric to some group $H' \in \mathcal{A}_{2}$, and for any given $g \in G$ and $H \in \mathcal{A}_{1}$, the image of $q(gH)$ lies within a bounded Hausdorff distance of some coset $g'H'$, where $g' \in G'$ and $H' \in \mathcal{A}_{2}$. Groff then uses these two facts in order to establish the quasi-isometry between the cusped spaces $X(G_{1}, \mathcal{A}_1)$ and $X(G_{2}, \mathcal{A}_{2})$. In general, the number of elements in $\mathcal{A}_{1}$ and $\mathcal{A}_{2}$ may not necessarily be equal. As we would like to restrict ourselves to the class of groups that are relatively hyperbolic with respect to a single peripheral subgroup, we assume that the quasi-isometry $q$ is a quasi-isometry of pairs $(G,H)$ and $(G',H')$ rather than relying on \cite[Theorem 4.8]{behrstockdrutumosher}.
\end{remark}

\subsection{Cohomology of Pairs}\label{section_cohomology of_a_pair}

Given a pair $(G,H)$, we recall the definition of the relative cohomology of $(G,H)$ with coefficients in a $RG$-module $M$, following \cite{BIERI1978277}.

Let $\mathbb{Z}[G/H]$ be the $\mathbb{Z}G$-module generated by the set $G/H$ of left cosets of $H$. Let $\epsilon\colon \mathbb{Z}[G/H] \rightarrow \mathbb{Z}$ be the $\mathbb{Z}G$-module morphism that sends every generator to $1$. Define $\Delta_{G/H} \coloneqq \ker \epsilon$. For an arbitrary ring $R$ the $RG$ module $\Delta_{G/H}^{R}$ is defined as $\Delta_{G/H} \otimes R$.
  Let $M$ be an $RG$-module. Then,
\[
H^{i}(G,H;M) \coloneqq \mathrm{Ext}_{RG}^{i-1}(\Delta_{G/H},M).
\]

\begin{remark}
We recall what this means. Let 
\[
\dots \rightarrow F_{2} \rightarrow F_{1} \rightarrow F_{0} \rightarrow \Delta_{G/H}^{R} \rightarrow 0
\]
be a projective resolution of the $G$-module $\Delta_{G/H}^{R}$. Then, $\mathrm{Ext}^{\ast}_{RG}(\Delta_{G/H}^{R},M)$ denotes the cohomology of the cochain complex
\[
\dots \leftarrow \mathrm{Hom}_{RG}(F_{1},M) \leftarrow \mathrm{Hom}_{RG}(F_{0},M) \leftarrow 0.
\]
\end{remark}

\begin{definition}[Cohomological Dimension of a Pair]
Let $G$ be a group, and let $H < G$ be a subgroup. The cohomological dimension of the pair $(G,H)$ with coefficients in a ring $R$, denoted by $\cd_R(G,H)$, is defined as
\[
\cd_R(G,H) \coloneqq \sup \{ i \mid H^{i}(G,H;M) \neq 0 \text{ for some } G\text{-module } M \}.
\]
\end{definition}

\begin{proposition}\cite[Prop. 1.1]{BIERI1978277}\label{LES_pair}
For any group pair $(G,H)$ and any $RG$-module $M$, there is a long exact sequence in cohomology:
\[
\dots \rightarrow H^{k}(G;M) \rightarrow H^{k}(H;M) \rightarrow H^{k+1}(G,H;M) \rightarrow H^{k+1}(G;M) \rightarrow \dots
\]
\end{proposition}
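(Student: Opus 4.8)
The plan is to deduce the long exact sequence directly from the defining short exact sequence of $\Delta_{G/H}$ by feeding it into the long exact sequence for $\mathrm{Ext}$, and then identifying the resulting Ext-groups. By the very definition of $\Delta_{G/H}$ as the kernel of $\epsilon$, there is a short exact sequence of $G$-modules
\[
0 \rightarrow \Delta_{G/H} \rightarrow \mathbb{Z}[G/H] \xrightarrow{\epsilon} \mathbb{Z} \rightarrow 0.
\]
Applying the contravariant functor $\mathrm{Ext}^{*}_{G}(-,M)$ to this sequence produces a long exact sequence
\[
\dots \rightarrow \mathrm{Ext}^{k}_{G}(\mathbb{Z},M) \rightarrow \mathrm{Ext}^{k}_{G}(\mathbb{Z}[G/H],M) \rightarrow \mathrm{Ext}^{k}_{G}(\Delta_{G/H},M) \rightarrow \mathrm{Ext}^{k+1}_{G}(\mathbb{Z},M) \rightarrow \dots
\]
so the whole task reduces to matching each of the three families of Ext-groups with the terms in the claimed sequence.

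Two of the three identifications are immediate. First, $\mathrm{Ext}^{k}_{G}(\mathbb{Z},M) = H^{k}(G;M)$ is the definition of ordinary group cohomology. Second, $\mathrm{Ext}^{k}_{G}(\Delta_{G/H},M) = H^{k+1}(G,H;M)$ is exactly the definition of relative cohomology from Section \ref{section_cohomology of_a_pair}, written with the index shift $H^{i}(G,H;M)=\mathrm{Ext}^{i-1}_{G}(\Delta_{G/H},M)$. The remaining identification, $\mathrm{Ext}^{k}_{G}(\mathbb{Z}[G/H],M)\cong H^{k}(H;M)$, is the substantive point. I would proceed by observing that $\mathbb{Z}[G/H]\cong \mathbb{Z}G\otimes_{\mathbb{Z}H}\mathbb{Z}$ is the induced module $\mathrm{Ind}_{H}^{G}\mathbb{Z}$, and invoke the adjunction between induction and restriction, which gives a natural isomorphism $\mathrm{Hom}_{G}(\mathrm{Ind}_{H}^{G}\mathbb{Z},M)\cong \mathrm{Hom}_{H}(\mathbb{Z},M)$. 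Since $\mathbb{Z}G$ is free as a right $\mathbb{Z}H$-module (choose one element from each left coset), induction is exact and carries projective $\mathbb{Z}H$-modules to projective $\mathbb{Z}G$-modules; hence applying $\mathrm{Ind}_{H}^{G}$ to a projective resolution of $\mathbb{Z}$ over $\mathbb{Z}H$ yields a projective resolution of $\mathbb{Z}[G/H]$ over $\mathbb{Z}G$. Passing the Hom-adjunction to these resolutions gives the version of Shapiro's lemma I need,
\[
\mathrm{Ext}^{k}_{G}(\mathbb{Z}[G/H],M)\cong \mathrm{Ext}^{k}_{H}(\mathbb{Z},M)=H^{k}(H;M).
\]

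Substituting the three identifications into the Ext long exact sequence then yields precisely
\[
\dots \rightarrow H^{k}(G;M) \rightarrow H^{k}(H;M) \rightarrow H^{k+1}(G,H;M) \rightarrow H^{k+1}(G;M) \rightarrow \dots,
\]
as claimed. The indexing lines up automatically because the relative term carries the degree shift built into its definition. I expect the main obstacle to be the Shapiro step: one must use the induction--restriction adjunction (the left-adjoint formulation) rather than the more frequently quoted coinduction version, and verify that induction preserves projectivity so that the adjunction isomorphism on $\mathrm{Hom}$ genuinely descends to derived functors. Everything else is a formal consequence of the long exact sequence for $\mathrm{Ext}$ and the definitions in place.
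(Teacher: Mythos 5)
Your proposal is correct and complete: the short exact sequence $0 \rightarrow \Delta_{G/H} \rightarrow \mathbb{Z}[G/H] \rightarrow \mathbb{Z} \rightarrow 0$, the contravariant $\mathrm{Ext}$ long exact sequence, and the Shapiro-type identification $\mathrm{Ext}^{k}_{G}(\mathbb{Z}[G/H],M)\cong H^{k}(H;M)$ via induced resolutions and the induction--restriction adjunction are all handled correctly, including the degree shift built into the definition of $H^{i}(G,H;M)$. The paper gives no proof of its own here --- it simply cites Bieri--Eckmann --- and your argument is essentially the one in that cited source, so there is nothing to reconcile.
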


We also recall the definition of an Eilenberg–MacLane pair for a group pair $(G,H)$.

\begin{definition}[Eilenberg–MacLane Pair]
Let $G$ be a group and $H$ a subgroup of $G$. Let $(X,Y)$ be a pair of CW-complexes such that $X$ and $Y$ are classifying spaces for $G$ and $H$, respectively, and the inclusion map $i\colon Y \hookrightarrow X$ induces the inclusion $H \hookrightarrow G$ on fundamental groups. Then, the pair $(X,Y)$ is said to be an Eilenberg–MacLane pair for $(G,H)$.
\end{definition}

Explicit constructions of Eilenberg–MacLane pairs for some relatively hyperbolic pairs are given in Section \ref{section_examples}. We now state the following lemma, which is simply \cite[Theorem 1.3]{BIERI1978277} applied to the case of the trivial $\mathbb{Z}G$-module $\mathbb{Z}$:

\begin{lemma}\label{classifyingpairlemma1}
Let $G$ be a group and $H$ a subgroup of $G$. Let $(X,Y)$ be an Eilenberg–MacLane pair for $(G,H)$. Let $H^{i}(X,Y,R)$ denote the cohomology of $X$ relative to $Y$, and let $R$ denote the trivial $RG$-module. Then,
\[
H^{i}(G,H,R) \cong H^{i}(X,Y,R).
\]
\end{lemma}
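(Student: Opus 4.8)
The plan is to realise the relative cohomology $H^{i}(X,Y;\mathbb{Z})$ at the chain level over $\mathbb{Z}G$ and match it, degree by degree, with $\mathrm{Ext}^{i-1}_{\mathbb{Z}G}(\Delta_{G/H},\mathbb{Z})$. Let $p\colon \tilde{X}\to X$ be the universal cover; since $X$ is a $K(G,1)$, the space $\tilde{X}$ is contractible and the cellular chain complex $C_{*}(\tilde{X})$ is a free resolution of the trivial module $\mathbb{Z}$ over $\mathbb{Z}G$. Because the inclusion $i\colon Y\hookrightarrow X$ induces the injection $H\hookrightarrow G$ on fundamental groups and $Y$ is a $K(H,1)$, the preimage $p^{-1}(Y)$ is a disjoint union $\bigsqcup_{gH\in G/H}\tilde{Y}_{gH}$ of copies of the (contractible) universal cover of $Y$, permuted by $G$ according to its action on the coset space $G/H$. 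I would first record this identification carefully, as it is precisely where the hypotheses on the Eilenberg--MacLane pair $(X,Y)$ enter.

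Next I would compute the homology of the relative complex $C_{*}(\tilde{X},p^{-1}Y)$, whose terms are free $\mathbb{Z}G$-modules on the relative cells (the $G$-action on cells of $\tilde{X}$ is free and preserves $p^{-1}Y$). Each component of $p^{-1}(Y)$ is contractible, so $H_{0}(p^{-1}Y)\cong \mathbb{Z}[G/H]$ as a $G$-module and $H_{n}(p^{-1}Y)=0$ for $n>0$; similarly $H_{0}(\tilde{X})\cong\mathbb{Z}$ and $H_{n}(\tilde{X})=0$ for $n>0$. Feeding these into the long exact sequence of the pair $(\tilde{X},p^{-1}Y)$, and observing that the induced map $H_{0}(p^{-1}Y)\to H_{0}(\tilde{X})$ is exactly the augmentation $\epsilon\colon\mathbb{Z}[G/H]\to\mathbb{Z}$, one obtains $H_{1}(\tilde{X},p^{-1}Y)\cong\ker\epsilon=\Delta_{G/H}$ and $H_{n}(\tilde{X},p^{-1}Y)=0$ for every $n\neq 1$.

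Thus $C_{*}(\tilde{X},p^{-1}Y)$ is a bounded-below complex of free $\mathbb{Z}G$-modules whose homology is concentrated in degree $1$, where it equals $\Delta_{G/H}$. By a standard truncation argument such a complex is chain homotopy equivalent to a free resolution of $\Delta_{G/H}$ placed in degrees $\geq 1$, and the single-degree shift here is exactly the shift built into the definition $H^{i}(G,H;-)=\mathrm{Ext}^{i-1}_{\mathbb{Z}G}(\Delta_{G/H},-)$. Since the relative cellular cochain complex of $(X,Y)$ with trivial coefficients is $\mathrm{Hom}_{\mathbb{Z}G}(C_{*}(\tilde{X},p^{-1}Y),\mathbb{Z})$ — using that $C_{*}(X,Y;\mathbb{Z})=\mathbb{Z}\otimes_{\mathbb{Z}G}C_{*}(\tilde{X},p^{-1}Y)$ together with the Hom-tensor adjunction — applying $\mathrm{Hom}_{\mathbb{Z}G}(-,\mathbb{Z})$ and passing to cohomology yields $H^{i}(X,Y;\mathbb{Z})\cong\mathrm{Ext}^{i-1}_{\mathbb{Z}G}(\Delta_{G/H},\mathbb{Z})=H^{i}(G,H;\mathbb{Z})$, as desired.

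I expect the main obstacle to be the homological bookkeeping in the last step: one must argue that the free complex $C_{*}(\tilde{X},p^{-1}Y)$, which is generally nonzero in degree $0$ even though its homology vanishes there, genuinely computes $\mathrm{Ext}^{i-1}_{\mathbb{Z}G}(\Delta_{G/H},-)$ rather than merely carrying the right homology abstractly. The clean way to close this is to produce a chain map from an honest free resolution $Q_{*}\to\Delta_{G/H}$, shifted into degree $1$, to $C_{*}(\tilde{X},p^{-1}Y)$ inducing the identity on $H_{1}$; both are bounded-below complexes of projectives, hence $K$-projective, so this quasi-isomorphism is a chain homotopy equivalence and survives application of $\mathrm{Hom}_{\mathbb{Z}G}(-,\mathbb{Z})$. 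As a consistency check, the resulting isomorphism is compatible with the long exact sequence of Proposition \ref{LES_pair} and with the topological long exact sequence of the pair $(X,Y)$, since both connecting maps descend from the single short exact sequence of chain complexes $0\to C_{*}(p^{-1}Y)\to C_{*}(\tilde{X})\to C_{*}(\tilde{X},p^{-1}Y)\to 0$.
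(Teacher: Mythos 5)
Your proposal is correct. Note, however, that the paper does not prove this lemma at all: it is stated as a direct citation of Bieri--Eckmann (\cite[Theorem 1.3]{BIERI1978277}) specialized to the trivial module $\mathbb{Z}$, and your argument is essentially the standard proof of that cited theorem. Concretely, your chain of steps --- identifying $p^{-1}(Y)$ with $\bigsqcup_{G/H}\tilde{Y}$, reading off $H_{1}(\tilde{X},p^{-1}Y)\cong\Delta_{G/H}$ and the vanishing of all other relative homology from the long exact sequence, and then upgrading the free complex $C_{*}(\tilde{X},p^{-1}Y)$ to a shifted free resolution of $\Delta_{G/H}$ via a quasi-isomorphism between bounded-below complexes of projectives --- is exactly how the degree shift in $H^{i}(G,H;-)=\mathrm{Ext}^{i-1}_{G}(\Delta_{G/H},-)$ is accounted for in the source. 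One small bonus of your write-up: everything before the final Hom-tensor adjunction works verbatim with an arbitrary $G$-module $M$ in place of $\mathbb{Z}$, so the same argument also establishes the paper's Lemma \ref{classifyingpairlemma2} (the general-coefficient statement that $\mathrm{Hom}_{G}(C_{*}(\tilde{X},\tilde{Y});M)$ computes $H^{*}(G,H;M)$); only the identification $\mathrm{Hom}_{\mathbb{Z}G}(C_{*}(\tilde{X},p^{-1}Y),\mathbb{Z})\cong C^{*}(X,Y;\mathbb{Z})$ uses that the coefficients are trivial.
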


We will need the following result from \cite[Prop. 2.20]{Manning2018CohomologyAT}:

\begin{lemma}\label{classifyingpairlemma2}
Let $(G,H)$ be a pair, and let $(X,Y)$ be an Eilenberg–MacLane pair for $(G,H)$. Let $\tilde{X}$ denote the universal cover of $X$, and let $\tilde{Y}$ denote the preimage of $Y$ under the canonical projection $ \tilde{X}\rightarrow X$. If $C_{\ast}(\tilde{X},\tilde{Y})$ denotes the relative chain complex, then the cohomology of $\mathrm{Hom}_{G}(C_{\ast}( \tilde{X},\tilde{Y});M)$ is $H^{\ast}(G,H;M)$. If, in addition, $X$ is finite-dimensional, then $\cd_R(G,H) \leq n$, where $n = \dim(X \setminus Y)$.
\end{lemma}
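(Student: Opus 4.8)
The plan is to realize the relative chain complex $C_{\ast}(\tilde{X},\tilde{Y})$ as a free resolution of $\Delta_{G/H}$, up to a shift by one in degree, and then read off both the cohomology identification and the dimension bound. First I would record the homology of the three complexes involved. Since $X$ is a $K(G,1)$, the universal cover $\tilde{X}$ is contractible and $G$ permutes the cells of $\tilde{X}$ freely, so $C_{\ast}(\tilde{X})$ is a free $\mathbb{Z}G$-resolution of the trivial module $\mathbb{Z}$. For $\tilde{Y}$, I would first observe that because $\pi_{1}(Y)=H\hookrightarrow G=\pi_{1}(X)$ is injective, the preimage $\tilde{Y}=p^{-1}(Y)$ decomposes as a disjoint union $\bigsqcup_{gH\in G/H}\tilde{Y}_{gH}$ of copies of the universal cover $\tilde{Y}_{0}$ of $Y$, indexed by the cosets $G/H$, with $G$ permuting the components via its action on $G/H$ and with $H$ acting freely on $\tilde{Y}_{0}$. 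Each component is contractible, and $C_{\ast}(\tilde{Y})\cong \mathbb{Z}G\otimes_{\mathbb{Z}H}C_{\ast}(\tilde{Y}_{0})$ is therefore a free $\mathbb{Z}G$-resolution of the induced module $\mathbb{Z}[G/H]$.

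Next I would exploit the short exact sequence of chain complexes of free $\mathbb{Z}G$-modules
\[
0\to C_{\ast}(\tilde{Y})\to C_{\ast}(\tilde{X})\to C_{\ast}(\tilde{X},\tilde{Y})\to 0
\]
and pass to the associated long exact sequence in homology. The one computation that carries real content is that the induced map $H_{0}(\tilde{Y})=\mathbb{Z}[G/H]\to H_{0}(\tilde{X})=\mathbb{Z}$ is precisely the augmentation $\epsilon$, since each component of $\tilde{Y}$ maps into the single (contractible) space $\tilde{X}$. Using the acyclicity of $\tilde{X}$ in positive degrees and of each component of $\tilde{Y}$, the long exact sequence then collapses to give $H_{1}(\tilde{X},\tilde{Y})\cong\ker\epsilon=\Delta_{G/H}$ and $H_{i}(\tilde{X},\tilde{Y})=0$ for all $i\neq 1$; in particular $H_{0}(\tilde{X},\tilde{Y})=\operatorname{coker}\epsilon=0$.

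With this in hand, $C_{\ast}(\tilde{X},\tilde{Y})$ is a complex of free $\mathbb{Z}G$-modules whose homology is concentrated in a single degree, and I would extract a projective resolution of $\Delta_{G/H}$ from it. Because $H_{0}=0$, the boundary $\partial_{1}\colon C_{1}(\tilde{X},\tilde{Y})\to C_{0}(\tilde{X},\tilde{Y})$ is surjective onto a free (hence projective) module, so it splits; peeling off this acyclic direct summand (a Gaussian-elimination / trivial-complex argument) produces a $\mathbb{Z}G$-chain homotopy equivalence between $C_{\ast}(\tilde{X},\tilde{Y})$ and the complex $\cdots\to C_{2}(\tilde{X},\tilde{Y})\to Z_{1}\to 0$, where $Z_{1}=\ker\partial_{1}$ is projective and sits in degree $1$. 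This shifted complex is exactly a projective resolution
\[
\cdots\to C_{2}(\tilde{X},\tilde{Y})\to Z_{1}\to \Delta_{G/H}\to 0
\]
placed one degree up. Applying $\mathrm{Hom}_{G}(-,M)$, which preserves chain homotopy equivalences, then yields $H^{n}\big(\mathrm{Hom}_{G}(C_{\ast}(\tilde{X},\tilde{Y}),M)\big)\cong\mathrm{Ext}^{n-1}_{G}(\Delta_{G/H},M)=H^{n}(G,H;M)$, which is the first assertion. For the dimension bound, if $X$ is finite-dimensional with $\dim(X\setminus Y)=n$, then $C_{i}(\tilde{X},\tilde{Y})=0$ for $i>n$ (these modules are free on the cells of $X\setminus Y$), so the extracted resolution has length at most $n-1$; hence $\mathrm{Ext}^{j}_{G}(\Delta_{G/H},M)=0$ for $j>n-1$, and therefore $H^{i}(G,H;M)=0$ for $i>n$, giving $\cd(G,H)\le n$.

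The main obstacle I anticipate is not any single hard estimate but the careful bookkeeping of the degree shift: the definition normalizes the relative cohomology as $\mathrm{Ext}^{i-1}$, and one must track precisely why the homology of $C_{\ast}(\tilde{X},\tilde{Y})$ lands in degree $1$ rather than $0$ and how the split-surjection argument reindexes the resolution so that the ``$-1$'' comes out correctly. The other point demanding care is the structural claim that $\tilde{Y}$ is a disjoint union of copies of $\tilde{Y}_{0}$ indexed by $G/H$ together with the freeness of the $G$-action on the cells of $\tilde{X}$ and $\tilde{Y}$; these are what guarantee that every term in sight is a free $\mathbb{Z}G$-module and that $C_{\ast}(\tilde{Y})$ really resolves the induced module $\mathbb{Z}[G/H]$ rather than some twisted variant.
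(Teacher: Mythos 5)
Your proof is correct, but it takes a genuinely different (much more self-contained) route than the paper. The paper's proof of Lemma \ref{classifyingpairlemma2} disposes of the first assertion in a single sentence by citing \cite{BIERI1978277}, and only argues the dimension bound: since $C_{i}(\tilde{X},\tilde{Y})=0$ for $i>n=\dim(X\setminus Y)$, the cochain complex $\mathrm{Hom}_{G}(C_{\ast}(\tilde{X},\tilde{Y});M)$ has no cohomology above degree $n$, and the bound $\cd(G,H)\leq n$ follows at once from the first assertion. What you have done is reconstruct, from scratch, exactly the content of the citation: the decomposition of $\tilde{Y}$ into copies of the universal cover of $Y$ indexed by $G/H$, the identification $C_{\ast}(\tilde{Y})\cong\mathbb{Z}G\otimes_{\mathbb{Z}H}C_{\ast}(\tilde{Y}_{0})$ resolving $\mathbb{Z}[G/H]$, the long exact sequence collapsing to $H_{1}(\tilde{X},\tilde{Y})\cong\Delta_{G/H}$ with all other relative homology vanishing, and the splitting-off of $C_{0}(\tilde{X},\tilde{Y})$ that turns the relative chain complex (shifted by one) into a projective resolution of $\Delta_{G/H}$ --- which is precisely where the ``$-1$'' in $H^{i}(G,H;M)=\mathrm{Ext}^{i-1}_{G}(\Delta_{G/H},M)$ comes from. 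Your derivation of the dimension bound via the length of the extracted resolution ($\mathrm{Ext}^{j}=0$ for $j>n-1$) is equivalent to, though slightly more roundabout than, the paper's direct vanishing observation. The trade-off: the paper's proof is two lines but opaque, while yours makes every $\mathbb{Z}G$-module-theoretic input explicit; moreover, your long-exact-sequence computation is the same device the paper itself deploys later in the proof of Theorem \ref{maintheorem1}, and your careful handling of the degree shift and of $\mathrm{coker}\,\epsilon=0$ is, if anything, cleaner than the corresponding passage there.
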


\begin{proof}
The first part of the statement appears in \cite{BIERI1978277}. Now, let $n = \dim(X \setminus Y)$. It follows that $C_{i}(\tilde{X},\tilde{Y}) = 0$ for all $i > n$. Consequently, the cohomology at the $i$-th term of the chain complex $\mathrm{Hom}_{G}(C_{\ast}(\tilde{X}, \tilde{Y});M)$ vanishes for all $i > n$ and for all $RG$-modules $M$. As a result, $\cd_{R}(G,H) \leq n$.
\end{proof}
\subsection{Dehn Fillings of Relatively Hyperbolic Groups}

\begin{theorem}\cite[Theorem 1.1]{Osin2005PeripheralFO}\label{dehn_filling}
Suppose $G$ is hyperbolic relative to $H$. Then, there exists a finite subset $F \subset G \setminus \{e\}$ of nontrivial elements of $G$ with the following property:  
if $N \lhd H$ is a normal subgroup such that $N \cap F = \emptyset$, then the following hold:
\begin{enumerate}
    \item The natural map $q\colon H/N \rightarrow G/\langle\langle N \rangle\rangle$ is injective.
    \item The quotient group $\bar{G} = G/\langle\langle N \rangle\rangle$ is relatively hyperbolic with respect to $\bar{H} = H/N$.
\end{enumerate}
\end{theorem}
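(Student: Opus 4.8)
The plan is to deduce the theorem from Osin's isoperimetric characterization of relative hyperbolicity, and then to show that a linear relative isoperimetric inequality is inherited, with controlled constants, by the quotient $\bar{G}$. Recall that $G$ being hyperbolic relative to $H$ is equivalent to $G$ admitting a finite relative presentation $\langle X, H \mid \mathcal{R}\rangle$ --- with $X$ a finite generating set and $\mathcal{R}$ a finite set of words in the free product $F(X)\ast H$ --- whose relative Dehn function is linear: every word $W$ in the alphabet $X\cup H$ representing the identity in $G$ is equal in $F(X)\ast H$ to a product of at most $C\,\|W\|$ conjugates of elements of $\mathcal{R}^{\pm 1}$, where $\|W\|$ counts the letters of $W$ coming from $X$.

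First I would record that $\bar{G}=G/\langle\langle N\rangle\rangle$ carries the relative presentation $\langle X, \bar{H}\mid \mathcal{R}\rangle$ with $\bar{H}=H/N$: since $N\lhd H$, the free-product fact $(F(X)\ast H)/\langle\langle N\rangle\rangle\cong F(X)\ast(H/N)$ shows that adjoining the relations $n=1$ for $n\in N$ and absorbing them into the peripheral factor replaces $H$ by $\bar H$ without altering $X$ or $\mathcal{R}$. The two assertions of the theorem then become (i) the peripheral map $\bar H\to\bar G$ is injective, i.e. $\langle\langle N\rangle\rangle\cap H=N$, and (ii) the relative Dehn function of this presentation of $\bar G$ is again linear. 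Both conclusions must follow from a single \emph{deepness} hypothesis on $N$, and this is exactly what the finite set $F$ is meant to encode.

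The set $F$ I would take to be the nontrivial elements of $G$ of word length at most $\rho$, for a constant $\rho$ depending only on $C$ and $\max_{R\in\mathcal{R}}\|R\|$; this is finite because $G$ is finitely generated, and the condition $N\cap F=\emptyset$ says precisely that $N$ contains no short elements. The core of the argument is then an analysis of van Kampen diagrams over $\langle X,\bar H\mid\mathcal{R}\rangle$, whose $2$-cells split into $\mathcal{R}$-cells and cells labelled by elements of $N$. Given $W$ trivial in $\bar G$, I would lift it to a word $\hat W$ over $X\cup H$ and apply the linear inequality for $G$; the places where the bound can degrade are exactly the subdiagrams filled by $N$-cells, and the hypothesis $N\cap F=\emptyset$ forces each such subdiagram to carry a definite amount of the geometry controlled by $\rho$, preventing a super-linear accumulation. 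Injectivity is the special case $W=h\in H$: a minimal diagram for an element of $\langle\langle N\rangle\rangle\cap H$ should, after this analysis, reduce to one built entirely from $N$-cells, whence $h\in N$, the degenerate low-complexity diagrams being excluded precisely by $N\cap F=\emptyset$.

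The hard part will be the diagram bookkeeping of the previous step: controlling the interaction between the fixed relators $\mathcal{R}$ and the new peripheral relations coming from $N$, organizing the $N$-cells into peripheral components, and calibrating $\rho$ (hence $F$) so that injectivity and linearity of the relative Dehn function hold simultaneously. This combinatorial core --- showing that no family of deep-filling relations can produce trivial words of super-linearly growing relative area --- is where essentially all of the difficulty resides. Alternatively, I could run the same argument geometrically, replacing relative presentations and diagrams by the cusped space of $(G,H)$ and its Gromov hyperbolicity, and showing that filling the horoballs deeply enough preserves hyperbolicity with uniform constants; the finite set $F$ then plays the identical role of guaranteeing that the alteration happens only far down the cusps, where the geometry is already controlled.
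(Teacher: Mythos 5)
First, a point of comparison: the paper does not prove this statement at all. It is imported verbatim from Osin's paper on peripheral fillings (the citation in the theorem header) and is used later only as a black box to construct the examples in Section 5. So there is no in-paper proof to measure your proposal against; it has to stand on its own.

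Your framing is sound, and it does mirror the strategy of Osin's actual proof: pass to the isoperimetric characterization of relative hyperbolicity, use $(F(X)\ast H)/\langle\langle N\rangle\rangle \cong F(X)\ast(H/N)$ to see that $\bar G$ inherits the relative presentation $\langle X,\bar H\mid \mathcal{R}\rangle$, correctly recognize that conclusion (1) is the nontrivial assertion $\langle\langle N\rangle\rangle\cap H=N$ rather than something automatic, and note that enlarging $F$ only strengthens the hypothesis, so taking $F$ to be a punctured ball is harmless. The problem is that the proposal then stops exactly where the theorem begins. Everything you defer as ``the hard part'' --- organizing the $N$-cells into peripheral components, controlling their interaction with the $\mathcal{R}$-cells, showing that a minimal diagram witnessing $h\in\langle\langle N\rangle\rangle\cap H$ collapses to one built from $N$-cells alone, and calibrating $\rho$ so that no super-linear accumulation of relative area occurs --- is precisely the content of Osin's theorem; none of it is routine bookkeeping. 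The one sentence offered in its place (that $N\cap F=\emptyset$ ``forces each such subdiagram to carry a definite amount of the geometry controlled by $\rho$'') is a restatement of the desired conclusion, not a mechanism: it does not explain why the absence of short elements in $N$ prevents $N$-cells from creating either new peripheral torsion in $\bar H$ or unbounded relative area. Both genuine routes --- Osin's diagram-theoretic argument and the Groves--Manning cusped-space argument you mention as an alternative --- require a substantial quantitative hyperbolicity input (stability of quasigeodesics, preservation of the hyperbolicity constant under deep fillings) that your outline neither states nor substitutes for. As written, this is a correct plan for locating a proof, not a proof.
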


We now state the following result due to Sun–Petrosyan \cite[Theorem A.(ii)]{PETROSYANSUN}:

\begin{theorem}\label{binsun}
Let $G$ be relatively hyperbolic with respect to a subgroup $H$. Then, there exists a finite subset $F \subset G \setminus \{e\}$ such that for all normal subgroups $N \lhd H$ satisfying $N \cap F = \emptyset$ and for all $\bar{G}$-modules $A$, there is a natural isomorphism
\[
H^{n}(\bar{G}, \bar{H}; A) \cong H^{n}(G, H; A)
\]
induced by the quotient maps $q\colon G \rightarrow \bar{G}$ and $q\colon H \rightarrow \bar{H}$. As a consequence, 
\[
\cd_{R}(\bar{G}, \bar{H}) \leq \cd_{R}(G, H).
\]
\end{theorem}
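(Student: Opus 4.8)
The statement to establish is the naturality isomorphism; the inequality $\cd(\bar G,\bar H)\le\cd(G,H)$ is then immediate, since by definition $H^{n}(G,H;M)=0$ for every $G$-module $M$ once $n>\cd(G,H)$, and a $\bar G$-module is in particular a $G$-module. So the plan is to produce a natural isomorphism $H^{n}(\bar G,\bar H;A)\cong H^{n}(G,H;A)$ for every $\bar G$-module $A$. Write $K=\langle\langle N\rangle\rangle$, so that $\bar G=G/K$. The first observation is that conclusion (1) of Theorem \ref{dehn_filling} is precisely the statement $H\cap K=N$; hence $\bar H=H/N$ embeds in $\bar G$ and we obtain a commuting diagram of extensions $1\to N\to H\to\bar H\to 1$ and $1\to K\to G\to\bar G\to 1$. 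Because $A$ is a $\bar G$-module, both $K$ and $N$ act trivially on $A$. I would compare the two Lyndon--Hochschild--Serre spectral sequences attached to these extensions and show that the only surviving contribution to the relative cohomology is the bottom row.

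The essential external input is a Cohen--Lyndon type decomposition of the kernel $K$. For all sufficiently deep fillings, that is, after enlarging the finite set $F$ of Theorem \ref{dehn_filling} if necessary, the normal closure splits as a free product $K=\langle\langle N\rangle\rangle=\ast_{t\in T}\,tNt^{-1}$, where $T$ is a left transversal of $HK$ in $G$; passing to the quotient, the images of $T$ form a transversal of $\bar H$ in $\bar G$, so the factors of this free product are permuted by $\bar G$ exactly as the cosets $\bar G/\bar H$. This is the hard part, and it is where the choice of $F$ is really used; I would invoke the work establishing the Cohen--Lyndon property for group-theoretic Dehn fillings rather than reprove it.

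Granting the decomposition, the cohomology of the kernels is transparent. For $q\ge 1$ the free-product formula gives $H^{q}(K;A)\cong\prod_{t\in T}H^{q}(tNt^{-1};A)$, and tracking the conjugation action identifies this, as a $\bar G$-module, with the coinduced module $\mathrm{Coind}_{\bar H}^{\bar G}\,H^{q}(N;A)$. Shapiro's lemma then yields $H^{p}(\bar G;H^{q}(K;A))\cong H^{p}(\bar H;H^{q}(N;A))$ for every $p$ and every $q\ge 1$, compatibly with the restriction maps induced by $\bar H\hookrightarrow\bar G$; above the bottom row the two spectral sequences are thus identified through an isomorphism. On the bottom row $q=0$, triviality of the actions gives $H^{0}(K;A)=A=H^{0}(N;A)$, so the relevant terms are $H^{p}(\bar G;A)$ and $H^{p}(\bar H;A)$, which are precisely the terms appearing in the long exact sequence of Proposition \ref{LES_pair} for the pair $(\bar G,\bar H)$.

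To assemble this, I would take the mapping cone of the restriction map between the two spectral sequences, equivalently filter the cone of cochain complexes whose cohomology is the relative cohomology $H^{\ast+1}(G,H;A)$. Its $E_{2}$-page in bidegree $(p,q)$ is the cone of $H^{p}(\bar G;H^{q}(K;A))\to H^{p}(\bar H;H^{q}(N;A))$: for $q\ge 1$ this is the cone of the Shapiro isomorphism of the previous paragraph, hence acyclic, while for $q=0$ it is the cone of $H^{p}(\bar G;A)\to H^{p}(\bar H;A)$, whose cohomology computes $H^{p+1}(\bar G,\bar H;A)$. Thus the spectral sequence degenerates onto the bottom row and yields the desired natural isomorphism $H^{n}(G,H;A)\cong H^{n}(\bar G,\bar H;A)$, with naturality in $A$ and in the quotient maps inherited from that of the spectral sequences; the inequality on $\cd$ follows at once. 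I expect two main obstacles: first, the Cohen--Lyndon decomposition itself, which is genuinely deep and dictates the exceptional set $F$; and second, the bookkeeping needed to make the identification $H^{q}(K;A)\cong\mathrm{Coind}_{\bar H}^{\bar G}H^{q}(N;A)$ fully $\bar G$-equivariant and compatible with the comparison maps, together with the low-degree ($q=0,1$) corrections in the free-product formula, all of which must be verified so that the relative spectral sequence is set up correctly.
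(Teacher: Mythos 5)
There is nothing to compare against here: the paper does not prove this statement at all. It is imported verbatim, with citation, as Theorem A.(ii) of Petrosyan--Sun \cite{PETROSYANSUN}, and the paper's role for it is purely as a black box used in Section \ref{section_examples}. Your proposal is, in outline, a faithful reconstruction of the proof in that cited reference: the genuinely hard input is exactly the one you identify, namely the Cohen--Lyndon property for sufficiently deep fillings ($\langle\langle N\rangle\rangle = \ast_{t\in T}\,tNt^{-1}$, $T$ a transversal of $H\langle\langle N\rangle\rangle$ in $G$, which dictates the finite exceptional set $F$), and the cohomological part proceeds as you describe, via the identification $H^{q}(\langle\langle N\rangle\rangle;A)\cong \mathrm{Coind}_{\bar H}^{\bar G}H^{q}(N;A)$, Shapiro's lemma, and a comparison of the two Lyndon--Hochschild--Serre spectral sequences. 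Two technical remarks on your flagged worries. First, the low-degree correction in the free-product formula that concerns you is vacuous here: since $A$ is a $\bar G$-module, $\langle\langle N\rangle\rangle$ and $N$ act trivially on it, so $H^{1}(\ast_{t}\,tNt^{-1};A)=\mathrm{Hom}(\ast_{t}\,tNt^{-1},A)=\prod_{t}H^{1}(tNt^{-1};A)$ with no extra term. Second, your description of the cone spectral sequence is slightly off as stated: an $E_{2}$-entry cannot literally ``be the cone'' of a map of groups. The correct bookkeeping is that the $E_{1}$-comparison maps are surjective, so the $E_{1}$-page of the cone consists of the kernel complexes of the row-comparison maps shifted down by one; the rows coming from $q\geq 1$ of the original sequences are then acyclic at $E_{2}$ precisely because the Shapiro maps are isomorphisms, and the surviving bottom row has $E_{2}$-entries $H^{p}(\bar G,\bar H;A)$, computed as the cohomology of $\ker\bigl(C^{\ast}(\bar G;A)\to C^{\ast}(\bar H;A)\bigr)$. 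This is cosmetic and the degeneration argument goes through, so your outline is sound; it simply reproves (modulo the delegated Cohen--Lyndon theorem and equivariance bookkeeping) the result the paper chose to cite.
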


We will use the above result in Section~\ref{section_examples} to construct examples of relatively hyperbolic groups with prescribed properties.

\section{A free resolution of finite type for $\Delta_{G/H}^{R}$}
In this section, we provide a proof of Theorem \ref{maintheorem1} and Corollaries \ref{cd_corollary} and \ref{main_corollary}. We start by defining a pair $( {X}_{r,d}, {Y})$ of simplicial complexes, such that $ {X}_{r,d}$ is acyclic and $G$ admits a simplicial action on $ {X}_{r,d}$. Furthermore, $ {Y}$ is a $G$-invariant subcomplex of $ {X}_{r,d}$ and every connected component of $ {Y}$ is contractible. First, we show in Proposition \ref{acyclic}  that $ {X}_{r,d}$ is acyclic. 
 We then show in Proposition 3.2 that the relative chain complex $C_{\ast}( {X}_{r,d}, {Y})$ 
  has finite length and each non-zero term is a finitely generated $RG$-module. Finally, we give a proof of Theorem \ref{maintheorem1}. \newline
We start by defining the complex $ {X}_{r,d}$. The directed system  $\{X_{r,d,s}\}_{s}$ has been defined in \cite{patil2023} and is inspired by the Relative Rips construction given by Dahmani \cite{Dahmani}. 
Let $G$ be relatively hyperbolic with respect to $H<G$. Let $S$ and $T$ be finite generating sets of $G$ and $H$, respectively, such that $T\subseteq S$. Let $d_T$ denote the word metric on $H$ with respect to $T$.
\begin{definition}
Let $G$ and $H$ be as above. Let $r,d>0$. The relative Rips complex $X_{r,d}$ is defined as follows: 
    \begin{enumerate}
    \item $ {X^{(0)}_{r,d}}=G$
    \item There is an edge between two $x,x'\in  {X}_{r,d}^{(0)}$ if one of the two conditions are satisfied:
    \begin{enumerate}
        \item $x$ and $x'$ lie in the same coset of $H$, or
        \item there is a relative geodesic $c\colon [0,l]\rightarrow \Gamma(G,S)$ with endpoints $x,x'$  such that the relative length of $c$ is at most $d$ and it travels less than $3r$ in the first and the last coset and less than $2r$ in any other coset. 
    \end{enumerate}
    \item A collection of $n$ vertices span a $n$-simplex if and only if there is an edge between any two of them in the one skeleton $ {X}_{r,d}^{(1)}$. 
\end{enumerate}
\end{definition}
 The group $G$ acts on the $0$-skeleton of $ {X}_{r,d}$ by left-multiplication. This action extends to a simplicial action on $ {X}_{r,d}$.
   For $r,d,s>0$, consider the flag subcomplex ${X}_{r,d,s}$ of $X_{r,d}$ such that $X_{r,d,s}^{(0)}=X_{r,d}^{(0)}$ and such that 1-skeleton $X_{r,d,s}^{(1)}$ is defined as follows, there is an edge between two $x,x'\in  {X}_{r,d,s}^{(0)}$ if one of the two conditions are satisfied:
    \begin{enumerate}
        \item $x$ and $x'$ lie in the same coset of $H$ and $d_{T}(1,x^{-1}x')\leq s$
        \item there is a relative geodesic $c\colon [0,l]\rightarrow \Gamma(G,S)$ with endpoints $x,x'$  such that the relative length of $c$ is at most $d$ and it travels less than $3r$ in the first and the last coset and less than $2r$ in any other coset.  
    \end{enumerate}

 For every $s>0$, $ {X}_{r,d,s}$ is a $G$-invariant subcomplex of ${X}_{r,d}$. Let ${K}$ denote the subcomplex of $ {X}$ such that $ {K}^{(0)}=H$ and such that there is a $n$-simplex spanning any collection of $(n+1)$ vertices $\{x_{0},x_{1},\dots, x_{n}\}\subset {K}^{(0)}$. In other words, $ {K}$ is the induced subcomplex of $X_{r,d}$ with vertex set $H$. $ {K}$ is the \emph{full simplex } on the set $H$, i.e. for each $n$, every $(n+1)$-element subset of $H$ spans an $n$-simplex. Let $Y$ denote the union of all $G$-translates of all $K$. 
\begin{proposition}\label{acyclic}
Let $G$ be a torsion-free group that is relatively hyperbolic with respect to a subgroup $H$. Let $d>4\delta +2$ and $ r>r_{BCP}(4d)$. Then $ {X}_{r,d}$ is acyclic. 
\end{proposition}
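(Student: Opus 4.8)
## Proof Proposal for Proposition \ref{acyclic}

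\textbf{Overall strategy.}\textbf{Overall strategy.} The plan is to prove that every reduced cycle in $\tilde{X}_{r,d}$ is a boundary by contracting finite subcomplexes toward the basepoint $e = 1 \in G = \tilde{X}^{(0)}$, in the style of Rips' argument for hyperbolic groups adapted (following Dahmani \cite{Dahmani}) to the relative setting. Since $\tilde{X}_{r,d} = \varinjlim_s \tilde{X}_{r,d,s}$ and reduced homology commutes with direct limits, any cycle is supported on a finite subcomplex, hence lies in some $\tilde{X}_{r,d,s}$; it therefore suffices to contract an arbitrary finite subcomplex inside $\tilde{X}_{r,d}$. The geometry that makes this possible lives in the coned-off graph $\hat{\Gamma}$, which is $\delta$-hyperbolic, and the two numerical hypotheses $d > 4\delta + 2$ and $r > r_{BCP}(4d)$ are precisely the estimates needed to keep all the auxiliary paths admissible as edges in the sense of Definition \ref{relativeRips}.

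\textbf{The pushing retraction.} For each vertex $x$ I would fix a relative geodesic $[e,x]$ and define $\pi(x)$ to be a group element lying on $[e,x]$ one coned-off unit closer to $e$ (setting $\pi(x) = x$ once $\hat{d}(e,x) \leq 1$). The two properties to establish are: (i) $\pi$ is simplicial, i.e.\ if $\{x_0,\dots,x_n\}$ spans a simplex then so does $\{\pi(x_0),\dots,\pi(x_n)\}$; and (ii) $\mathrm{id} \simeq \pi$ through a simplicial ``prism'' homotopy, for which one needs that each simplex $\sigma$ and its image $\pi(\sigma)$ together span a single simplex. Granting (i) and (ii), $\pi$ is homotopic to the identity yet strictly decreases $\max_{x}\hat{d}(e,x)$ over the (finite) support. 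Iterating finitely many times drives the support into the coset $H$ of $e$, on which the induced subcomplex is the full simplex $\tilde{K}$ and is therefore contractible. Hence the original cycle, whose class is unchanged under the homotopies, becomes a boundary. As this holds for every finite subcomplex, $\tilde{X}_{r,d}$ is acyclic.

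\textbf{The main obstacle.} The heart of the argument, and the step requiring real care, is verifying (i) and (ii): that the pushed (and prism) vertices are still pairwise joined by admissible edges, i.e.\ at relative distance $\leq d$ with coset penetration below the thresholds $3r$ (first and last coset) and $2r$ (interior cosets). For a pair $x,y$ at relative distance $\leq d$ I would connect their pushes by concatenating the subsegment of $[e,x]$ from $\pi(x)$ to $e$, a relative geodesic realizing $\hat{d}(x,y)$, and the subsegment of $[e,y]$ from $e$ to $\pi(y)$; this yields a relative quasi-geodesic of constant at most $4d$. Hyperbolicity with $d > 4\delta + 2$ provides the fellow-traveling estimate forcing its relative length back below $d$, while the BCP property with $r > r_{BCP}(4d)$ controls how deeply it enters each coset, delivering the required $2r$ and $3r$ bounds. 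The remaining bookkeeping is keeping track of which coset counts as first/last versus interior, and handling the degenerate case in which $x,\pi(x),y$ share a coset, where admissibility follows instead from condition (a) of Definition \ref{relativeRips} together with the fullness of $\tilde{K}$. Everything else is the standard collapsing formalism.
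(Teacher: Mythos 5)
Your overall strategy (direct-limit reduction, then a Rips-style contraction pushing finite subcomplexes toward the identity, finishing with the contractibility of the full simplex on a coset) is the right genre of argument, but it is not what the paper does, and as written it has a genuine gap exactly where all of the difficulty of the proposition lies. The paper's own proof is a short reduction: a cycle in $\tilde{X}_{r,d}$ lies in some $\tilde{X}_{r,d,s}$; by the earlier paper \cite{patil2023} (proof of Theorem 1.1, which is where the hypotheses $d>4\delta+2$ and $r>r_{BCP}(4d)$ are actually consumed), its class decomposes as a finite sum of classes each represented by a cycle supported on a single coset of $H$; and in the limit complex each coset spans an infinite-dimensional simplex, so each summand vanishes. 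The paper never constructs a pushing map $\pi$, a prism homotopy, or comparison quasi-geodesics; all of that geometry is delegated to the citation. Your proposal, by contrast, undertakes to reprove that cited geometry, so it must be judged on whether that part is carried out.

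It is not. Steps (i) and (ii) --- that $\pi$ and the prisms are simplicial, i.e.\ that pushed vertices still satisfy both the relative-distance bound $d$ and the coset-penetration bounds $3r$ and $2r$ --- are the entire content, and your sketch of them does not hold together. The comparison path you describe (``the subsegment of $[e,x]$ from $\pi(x)$ to $e$, a relative geodesic realizing $\hat{d}(x,y)$, and the subsegment of $[e,y]$ from $e$ to $\pi(y)$'') is not a path: the middle piece joins $x$ to $y$ while the outer pieces end at $e$, so the three pieces do not concatenate. If you instead route it as $\pi(x)\to x\to y\to \pi(y)$, it does concatenate and has relative length at most $d+2$, but it is then not automatic that it is a quasi-geodesic in the multiplicative sense the paper uses (it may backtrack, and its endpoints may coincide, making the lower quasi-geodesic inequality fail), so invoking BCP with the constant $r_{BCP}(4d)$ requires a real argument; and if you genuinely route it through $e$, its length is unbounded relative to $\hat{d}(\pi(x),\pi(y))$, so it is not a uniform quasi-geodesic at all and BCP does not apply. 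Note also that your argument nowhere uses torsion-freeness of $G$, one of the standing hypotheses --- a further sign that the load-bearing geometry has been asserted rather than proved. A self-contained proof along your lines amounts to reproving the decomposition theorem of \cite{patil2023}; the efficient route is the paper's: quote that decomposition and then use the full simplices on cosets.
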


\begin{proof}
Let $\alpha\in H_{n}( {X}_{r,d})$, and let $\sigma$ be an $n$-cycle representing $\alpha$. As $ {X}_{r,d}=\bigcup_{s}  {X}_{r,d,s}$, every simplex in the support of $\sigma$ is contained in $ {X}_{r,d,s}$  for a sufficiently large $s$. It was shown in \cite[Proof of Theorem 1.1]{patil2023} that given an element $\alpha\in H_{n}( {X}_{r,d,s})$, where $d>4\delta +2$ and $ r>r_{BCP}(4d)$, one can express $\alpha$ as a sum $\alpha=\alpha_{1}+\dots+\alpha_{N}$ such that each $\alpha_{i}\in H_{r,d,s}( {X}_{r,d,s})$ is represented by an $n$-cycle that is supported on a single coset of $H$. The subcomplex spanned by each coset of $H$ is simply an infinite-dimensional simplex and is thus contractible. Consequently, $\alpha_i=0$, for each $i$. It follows that $\alpha=0$ i.e. $H_{n}( {X}_{r,d})=0$.  
\end{proof}

\begin{remark}
    It can also be proved that $ {X}_{r,d}$ is simply connected and hence contractible. However, this is not required for the proof of Theorem \ref{maintheorem1}. 
\end{remark}

\begin{proposition}\label{eilenbergmclane}
    Let $( {X}_{r,d}, {Y})$ be as defined above. Then the relative chain complex $C_{\ast}( {X}_{r,d}, {Y})$ has finite length, and every nontrivial term is finitely generated as a $\mathbb{Z}G$-module. 
\end{proposition}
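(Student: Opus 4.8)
The plan is to work directly with the simplices of $\tilde{X}_{r,d}$ and reduce everything to elementary counting in the word metric. The first observation is that an $n$-simplex of $\tilde{X}_{r,d}$ lies in $\tilde{Y}$ if and only if all of its $n+1$ vertices belong to a single coset of $H$: the components of $\tilde{Y}$ are precisely the full simplices $g\tilde{K}$ on the cosets $gH$, and conversely any set of vertices inside one coset is pairwise joined by edges of type (a), hence spans a simplex of the flag complex $\tilde{X}_{r,d}$. Consequently $C_n(\tilde{X}_{r,d},\tilde{Y})$ is free abelian on the ``relative'' $n$-simplices, those whose vertex set meets at least two distinct cosets, and $G$ permutes these by left multiplication. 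Both claims of the proposition then reduce to controlling the geometry of relative simplices, and I would write $d_S$ for the word metric on $G$ and $B_\rho$ for the (finite) ball of radius $\rho$ about the identity.

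The quantitative heart of the argument is a diameter bound for edges of type (b). If $x$ and $x'$ lie in distinct cosets and are joined by an edge, then by definition there is a relative geodesic between them of relative length at most $d$ that travels less than $3r$ in each coset it penetrates. Since the relative length bounds the number of coset penetrations and of genuine Cayley edges by $d$, and each such relative segment has word length less than $3r$ (a Cayley edge has word length $1$), un-coning the path gives $d_S(x,x') \le D$ for a constant $D$ depending only on $r$ and $d$ (one may take $D := 3rd$). Because $G$ is finitely generated, $b := |B_D|$ is finite, and this single estimate drives both conclusions.

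With $D$ in hand, finite length follows by counting. Let $\sigma=\{x_0,\dots,x_n\}$ be a relative $n$-simplex meeting the distinct cosets $C_1,\dots,C_k$ with $k\ge 2$. Picking one vertex in each $C_j$ produces $k$ points lying in pairwise distinct cosets, hence pairwise joined by type-(b) edges and so pairwise within $D$; they therefore lie in a single translate of $B_D$, giving $k\le b$. Moreover, fixing any vertex $z$ of $\sigma$ outside a given coset $C_j$, every vertex of $\sigma$ in $C_j$ is joined to $z$ by a type-(b) edge and hence lies in $B(z,D)$, so each coset contributes at most $b$ vertices. Thus $n+1\le kb\le b^2$, and $C_n(\tilde{X}_{r,d},\tilde{Y})=0$ for all $n\ge b^2$.

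Finite generation of each term uses the same estimate together with the freeness of the vertex action. Given a relative $n$-simplex, I would translate by the inverse of one of its vertices so that $e\in\sigma$; using $k\ge 2$, every vertex of the translated simplex lies within $2D$ of $e$ (a vertex outside $eH$ is within $D$ of $e$ directly, while a vertex $w\in eH$ is within $D$ of some $z$ in another coset, which is in turn within $D$ of $e$). Hence every $G$-orbit of relative $n$-simplices has a representative supported in the finite ball $B_{2D}$, of which there are only finitely many, so $C_n(\tilde{X}_{r,d},\tilde{Y})$ is finitely generated over $\mathbb{Z}G$. (Since $G$ acts freely on the vertex set $G$ by left multiplication and is torsion-free, no nontrivial element stabilizes a simplex, so these terms are in fact free $\mathbb{Z}G$-modules, which is what Theorem \ref{maintheorem1} requires.) The only step deserving genuine care is the un-coning estimate of the second paragraph: one must verify that ``relative length at most $d$ with bounded coset penetration'' really bounds the word metric $d_S$, and not merely the coned-off metric $\hat d$. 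Once that is pinned down, both assertions are immediate consequences of the finiteness of balls in a finitely generated group.
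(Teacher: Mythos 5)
Your proof is correct and follows essentially the same route as the paper's: both rest on the observation that a simplex lies in $\tilde{Y}$ exactly when its vertices sit in a single coset, on the word-length bound of $3rd$ for type-(b) edges between vertices in distinct cosets, and on counting inside finite balls after translating a vertex to the identity (the paper bounds all vertices of a relative simplex within $6dr$ of a fixed vertex, which is your $2D$; your $b^{2}$ bookkeeping for the dimension bound is an immaterial variant). Your parenthetical observation that torsion-freeness forces trivial simplex stabilizers, so that the relative chain modules are actually free over $\mathbb{Z}G$, is also correct and is precisely what is needed when this proposition is fed into the proof of Theorem \ref{maintheorem1}.
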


\begin{proof} 
To show that $C_{\ast}( {X}_{r,d}, {Y})$ is a finite chain complex, it suffices to prove that for a sufficiently large natural number $N$, every simplex in $ {X}_{r,d}$ of dimension greater than $N$ lies in $ {Y}$. 

Let $\sigma$ be an $n$-cell in $ {X}$. Suppose $\sigma$ is spanned by $\{x_{0},x_{1},\dots,x_{n}\}$. Then either all $x_i$ lie in a single coset of $H$, or there exist $j, k$ such that $x_{j}$ and $x_{k}$ lie in different cosets $x_{j}H$ and $x_{k}H$ of $H$. In the first case, $ {\sigma}$ is a simplex of $ {Y}$. 

Now suppose that $x_{j}$ and $x_{k}$ lie in different cosets for some $j,k\in\{0,1,\dots,n\}$. Since there exists a path of length at most $3d\cdot r$ that joins $x_{j}$ and $x_{k}$, any other $x_{l}$ is connected to either $x_{j}$ or $x_{k}$ by a path of length at most $3d\cdot r$. In either case, $d_{S}(x_{j},x_{l})\leq 6d\cdot r$. Thus, $x_{l}$ lies in the ball of radius $6d\cdot r$ around $x_{j}$. Thus, any such simplex has dimension at most $|B(x_{j},6d\cdot r)|$. 

It remains to show that each $C_{n}( {X}_{r,d}, {Y})$ is finitely generated as a $\mathbb{Z}G$-module. It suffices to show that the action of $G$ on $ {X}_{r,d}^{(n)}/ {Y}^{(n)}$ has only finitely many $G$-orbits. Let $\sigma \in  {X}_{r,d}^{(n)}/ {Y}^{(n)}$. Up to $G$-translation, we may assume that one of the vertices is the identity element $e\in  {X}_{r,d}^{(0)}$. Since $\sigma$ is not an element of $ {Y}$, there is at least one vertex $v$ of $\sigma$ that does not lie in $H$. Let $v'$ be any other vertex of $\sigma$. Then $d_{S}(e,v')\leq 6d\cdot r$. Thus, there are only finitely many simplices of a fixed dimension that can arise in this way. 
\end{proof}

We are now ready to prove Theorem \ref{maintheorem1}. 
\begin{proof}[Proof of Theorem \ref{maintheorem1}:]
First we prove the statement for $R=\mathbb{Z}$. 
In view of Proposition \ref{eilenbergmclane}, it suffices to show that the $\mathbb{Z}G$-module $H_{1}(X_{r,d},Y)$
is isomorphic to the \(G\)-module \(\Delta_{G/H}\). Indeed, the required projective resolution can then be obtained by appending \(\Delta_{G/H}\) to the chain complex \((C_{\ast}( {X}_{r,d}, {Y}),\partial)\):
\[
\dots \to C_{2}( {X}_{r,d}, {Y}) \to C_{1}( {X}_{r,d}, {Y}) \to H_{1}(X_{r,d},Y)\cong \Delta_{G/H} \to 0.
\]
As $C_0(X_{r,d},Y)=0$ ($X_{r,d}$ and $Y$ have the same vertex set), the map $\partial:C_{2}(X_{r,d},Y)\rightarrow C_1(X_{r,d},Y)$ is surjective, i.e., $C_{1}(X_{r,d},Y)= Z_1(X_{r,d},Y)$. As a result, there is a natural surjective map from $C_{1}(X_{r,d},Y)$ to $H_1(X_{r,d},Y)$.    
By assumption,
there is no torsion, so each $G$-module $C_i(X_{r,d},Y)$ is
projective, in fact free. Moreover, the above complex is exact. For $i\geq 2$, one can deduce that $H_i(X_{r,d},Y)=0$ by using the long exact sequence corresponding to the pair $(X_{r,d},Y)$ and the fact that $X_{r,d}$ and $Y$ both have trivial homology in degrees $\geq 1$.  
Let \((C_{\ast}( {X}_{r,d}),\partial)\) denote the chain complex of \( {X}_{r,d}\). The chain complex of \( {Y}\) forms a subcomplex of \((C_{\ast}( {X}_{r,d}),\partial)\), and the relative chain complex \(C_{\ast}( {X}_{r,d}, {Y})\) is the quotient. Since \( {X}_{r,d}\) is connected, the \(0\)-th homology of \((C_{\ast}( {X}_{r,d}),\partial)\) is isomorphic to the trivial \(G\)-module \(\mathbb{Z}\). The connected components of \( {Y}\) correspond to the cosets \(gH\) of the group \(H\). The \(0\)-th homology \(H_{0}( {Y})\) of \((C_{\ast}( {Y}),\partial)\) is isomorphic to \(\mathbb{Z}[G/H]\). 

The map \( i_{\ast}:H_{0}( {Y})\to H_{0}( {X}_{r,d})\) induced by the inclusion map coincides with the augmentation map \(\epsilon:\mathbb{Z}[G/H]\to \mathbb{Z}\). Consider the first few terms of the long exact sequence in homology associated with the pair \(( {X}_{r,d}, {Y})\):
\[
0=H_{1}(X_{r,d})\rightarrow H_{1}(X_{r,d},Y) \to H_{0}(Y) \xrightarrow{\epsilon} H_{0}( {X}_{r,d}) \to H_{0}(X_{r,d},Y)=0.
\]
Thus, \(H_{1}( {X}_{r,d}, {Y}) \cong \ker(\Delta_{G/H})\). 
Next, we prove the statement for an arbitrary ring $R$:
It suffices to show that the $RG$-module $C_{i}(X_{r,d}, Y; R) := C_{i}(X_{r,d}, Y) \otimes R$ is projective for each $i > 0$. The module $C_{i}(X_{r,d}, Y; R)$ can be expressed as a direct sum $\bigoplus_{\sigma} R\sigma$, where the direct sum ranges over all $i$-simplices $\sigma$ in $X_{r,d}/Y$ modulo $G$. Each summand $R\sigma$ is isomorphic (as an $RG$-module) to $R[G/G_{\sigma}]$, where $G_{\sigma}$ is the stabilizer of $\sigma$ and $G$ acts on the set of left cosets of $G_{\sigma}$ by left multiplication.

Observe that any non-trivial element $g \in G_{\sigma}$ permutes the vertices of $\sigma$ non-trivially. As a result, $G_{\sigma}$ is a finite group of order at most $(i+1)!$. We show that the module $R[G/G_{\sigma}]$ is projective by proving that it is a direct summand of the free $RG$-module $RG$.

Let $q: RG \to R[G/G_{\sigma}]$ denote the canonical surjective map that sends an element $g$ to the corresponding left coset $gG_{\sigma}$, and let $p: R[G/G_{\sigma}] \to RG$ denote the map that sends a coset $gG_{\sigma}$ to the element $|G_{\sigma}|^{-1} \sum_{h \in G_{\sigma}} gh$. Since we assumed $G$ does not contain
$R$-torsion, $|G_\sigma|$ is invertible in $R$ and $p$ is well-defined. It is easy to see that $q \circ p$ is the identity map on $R[G/G_{\sigma}]$. Thus, $R[G/G_{\sigma}]$ is projective.
\end{proof}
\begin{remark}
Other Rips-style constructions for relatively hyperbolic groups have been defined in the literature.  
The authors of \cite{Martínez-Pedroza_Przytycki_2019} construct, for a given relatively hyperbolic pair \((G,H)\), a finite-dimensional contractible simplicial complex \(\Gamma_{n}^{\triangle}\) such that \(G\) acts cocompactly on \(\Gamma_{n}^{\triangle}\), the stabilizer of each vertex is either finite or conjugate to \(H\), and the stabilizer of each simplex of dimension at least one is finite (in particular, trivial if \(G\) is torsion-free). 
$\Gamma_{n}^{\triangle}$ is constructed as follows: \newline
Let $S\subset G$ and be a finite symmetric generating set of $G$ let $d_{S}$ denote the word metric on $G$. The 0-skeleton ${\Gamma_{n}^{\triangle}}^{(0)}$ is the same as that of the coned-off Cayley graph, namely, $ G\sqcup \{v(gH)\}_{gH\in G/H}$. Let $d:{\Gamma_{n}^{\triangle}}^{(0)}\rightarrow \mathbb{R}$ such that $d(g_{1},g_{2})=d_{S}(g_{1},g_{2})$ for all $g_{1},g_{2}\in G$ and $d_{S}(g_{1},v(g_{2}H))=d_{S}(g_{1},g_{2}H)$ and $d(g_{1}H,g_{2}H)$ is equal to the distance between the two cosets when considered as subsets of the metric space $(G,d_{S})$.  Note that  $d_S$ is not
a metric.
Two vertices $x,y$ are declared to be adjacent if $d(x,y)<n$. $\Gamma_{n}^{\triangle}$ is defined to be the flag completion of $\Gamma_{n}^{\triangle}$.   
There is a natural choice of a subcomplex \(Z \subset \Gamma_{n}^{\triangle}\) such that the pair \((X_{r,d}, Y)\) can be replaced by \((\Gamma_{n}^{\triangle}, Z)\). Let $K$ be the subcomplex of $\Gamma_{n}^{\triangle}$ spanned by the set $H\cup v(H)$ where $v(H)$ is the cone vertex corresponding to the coset $H$, and take $Z$ to be the union of all $G$-translates of $K$, then, every connected component of $Z$ is contractible. Proposition \ref{acyclic} and Proposition \ref{eilenbergmclane} remain true if one replaces  \((X_{r,d}, Y)\) by \((\Gamma_{n}^{\triangle}, Z)\). So, one can prove Theorem \ref{maintheorem1}  using \((\Gamma_{n}^{\triangle}, Z)\) as well. The argument needed to prove acyclicity (or contractibility) of $X_{r,d}$ is rather different than that of $\Gamma_{n}^{\triangle}$ as it does not make use of tools such as dismantlability.  
\end{remark} 

We now give a proof of Corollary \ref{cd_corollary}. 

\begin{proof}[Proof of Corollary \ref{cd_corollary}:]
 Let $G$ be a torsion-free group such that $G$ is relatively hyperbolic with respect to a subgroup $H$ of finite cohomological dimension. By Theorem \ref{maintheorem1}, the relative cohomological dimension $\textrm{cd}_{R}(G,H)$ is finite. Let $m=\textrm{max}\{ \textrm{cd}_{R}(G,H) , \textrm{cd}_{R}(H)\}$. Then, both $H^{i}(G,H;M)$ and $H^{i}(H;M)$ vanish for all $i>m$ and for all $RG$-modules $M$. It follows from Proposition \ref{LES_pair} that $H^{i}(G,M)$ vanishes for all $i>m$ and all $RG$-modules $M$. In other words, $\textrm{cd}_{R}(G)\leq m$. 
\end{proof}
We now provide a proof of Corollary \ref{main_corollary}. We start by proving a couple of lemmas which will be used in the proof of Corollary \ref{main_corollary}. 
\begin{lemma}\label{LES}
    For any exact sequence $0\rightarrow M'\xrightarrow{i} M\xrightarrow{j} M''\rightarrow 0$ of $RG$-modules, and for any $n$, there is a natural map $\partial: H^{n}(G,H;M'')\rightarrow H^{n-1}(G,H;M')$ such that the sequence 
    \newline
    \begin{multline}
                 0 \rightarrow H^{0}(G,H;M')\rightarrow H^{0}(G,H;M)\rightarrow H^{0}(G,H;M'')\\\xrightarrow{\partial} H^{1}(G,H;M')\rightarrow H^{1}(G,H;M)\rightarrow H^{1}(G,H;M'') \rightarrow \dots 
    \end{multline}
    is exact. 
\end{lemma}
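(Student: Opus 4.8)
The plan is to recognize the asserted sequence as nothing more than the standard long exact $\mathrm{Ext}$-sequence in the second (covariant) variable, specialized to the fixed module $\Delta_{G/H}$. Recall that by definition $H^{n}(G,H;M)=\mathrm{Ext}^{n-1}_{G}(\Delta_{G/H},M)$, so after the degree shift $n\mapsto n-1$ the displayed sequence is precisely the long exact sequence of the functors $\mathrm{Ext}^{\ast}_{G}(\Delta_{G/H},-)$ attached to the short exact sequence $0\to M'\to M\to M''\to 0$. Since $\mathrm{Ext}^{-1}=0$, the three $H^{0}$ terms all vanish, which is consistent with the sequence beginning with these zeros; nothing about $\Delta_{G/H}$ is used beyond its being a fixed $G$-module, so the argument is entirely formal.

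First I would fix a projective resolution $P_{\ast}\to\Delta_{G/H}\to 0$ of $G$-modules; the finite-type free resolution produced in Theorem \ref{maintheorem1} serves, although any projective resolution works equally well. Applying $\mathrm{Hom}_{G}(P_{\ast},-)$ to the three modules and using that each $P_{i}$ is projective, so that $\mathrm{Hom}_{G}(P_{i},-)$ is an exact functor, I obtain a short exact sequence of cochain complexes
\[
0\to \mathrm{Hom}_{G}(P_{\ast},M')\to \mathrm{Hom}_{G}(P_{\ast},M)\to \mathrm{Hom}_{G}(P_{\ast},M'')\to 0.
\]
The only nonformal point is the surjectivity of the right-hand map in each degree: a homomorphism $P_{i}\to M''$ lifts along the surjection $M\twoheadrightarrow M''$ precisely because $P_{i}$ is projective.

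Next I would invoke the snake lemma: a short exact sequence of cochain complexes yields a long exact sequence in cohomology together with a natural connecting homomorphism $\delta$. Reading this sequence in cohomological degree $j=n-1$ and translating back through $H^{n}(G,H;-)=\mathrm{Ext}^{n-1}_{G}(\Delta_{G/H},-)$ produces exactly the displayed sequence, in which $\partial$ is the connecting map $H^{n}(G,H;M'')\to H^{n+1}(G,H;M')$ induced by $\delta$ (note that the connecting map \emph{raises} degree by one, as the displayed row $H^{0}(G,H;M'')\xrightarrow{\partial}H^{1}(G,H;M')$ shows). Naturality of $\partial$ follows from the naturality of the connecting homomorphism with respect to morphisms of short exact sequences of cochain complexes, together with the usual comparison-theorem argument showing independence of the chosen resolution.

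There is no substantial obstacle here. The two points that require care are keeping track of the degree shift so that $\partial$ lands in $H^{n+1}$, and verifying the exactness of the Hom functor on the resolution, which is exactly where projectivity of the $P_{i}$ enters. Once the definition of relative cohomology is unwound, the remainder is a direct application of homological algebra.
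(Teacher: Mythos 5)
Your proof is correct and follows essentially the same route as the paper: fix a projective (in the paper, free) resolution of $\Delta_{G/H}$, use projectivity of each term to get a short exact sequence of cochain complexes under $\mathrm{Hom}_{G}(P_{\ast},-)$, and invoke the standard connecting-homomorphism result of homological algebra. Your additional observation that the connecting map must raise degree (so the $H^{n-1}$ in the lemma's statement of $\partial$ should read $H^{n+1}$, as the displayed sequence confirms) is a correct reading of a typo in the statement.
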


\begin{proof}
Let $\dots \rightarrow P_{1}\rightarrow P_{0}\rightarrow\Delta_{G/H}$ denote a free resolution of $\Delta_{G/H}$. For each $P_{i}$, the modules $ \textrm{Hom}_{G}(P_{i},M)$ fit into an exact sequence,
$$0\rightarrow \textrm{Hom}_{G}(P_{i},M')\xrightarrow{i_{\ast}} \textrm{Hom}_{G}(P_{i},M)\xrightarrow{j_{\ast}} \textrm{Hom}_{G}(P_{i},M'')\rightarrow 0.$$ 
This gives rise to a short exact sequence of chain complexes. The existence of $\partial$ and the associated long exact sequence now follow from a standard result in homological algebra (cf. \cite[Prop. 1.3.1]{Weibel_1994}). 
\end{proof}

\begin{lemma}\label{free_module}
   If $\textrm{cd}_{R}(G,H)<\infty$, then
   $$\textrm{cd}_{R}(G,H) = \{ n \mid H^{n}(G,H;F) \neq 0 \text{ for some free } G\text{-module } F \}.$$ 
\end{lemma}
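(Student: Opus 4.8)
The plan is to prove the asserted equality as two inequalities, interpreting the right-hand side as the supremum $S \coloneqq \sup\{\,m \mid H^{m}(G,H;F)\neq 0 \text{ for some free } G\text{-module } F\,\}$ and writing $n \coloneqq \cd(G,H)$, which is finite by hypothesis. The inequality $S \le n$ is immediate: a free $G$-module is in particular a $G$-module, so any $m$ witnessing $S$ also lies in the defining set of $\cd(G,H)$, and taking suprema gives $S \le n$. The content of the lemma is therefore the reverse inequality $n \le S$.

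To prove $n \le S$ it suffices to produce a single free $G$-module $F$ with $H^{n}(G,H;F)\neq 0$, since this places $n$ in the set defining $S$. By definition of $n = \cd(G,H)$ there is a $G$-module $M$ with $H^{n}(G,H;M)\neq 0$, while $H^{n+1}(G,H;N)=0$ for every $G$-module $N$. I would then choose a free $G$-module $F$ surjecting onto $M$ and write the resulting short exact sequence of $G$-modules as $0 \to K \to F \to M \to 0$.

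Feeding this short exact sequence into Lemma \ref{LES} yields a long exact sequence in relative cohomology whose relevant segment is
$$H^{n}(G,H;F) \to H^{n}(G,H;M) \xrightarrow{\partial} H^{n+1}(G,H;K).$$
Because $n = \cd(G,H)$, the term $H^{n+1}(G,H;K)$ vanishes, so the map $H^{n}(G,H;F) \to H^{n}(G,H;M)$ is surjective. Since its target $H^{n}(G,H;M)$ is nonzero, the source $H^{n}(G,H;F)$ is nonzero as well, giving $n \le S$ and hence $\cd(G,H) = S$.

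I do not anticipate a genuine obstacle: this is the relative analogue of the classical fact that the cohomological dimension of a group is detected by free coefficient modules, and the engine is pure dimension shifting. The only points that require care are bookkeeping ones. First, one must use the form of the long exact sequence in which the connecting homomorphism of Lemma \ref{LES} raises the cohomological degree by one (as in the displayed sequence there), so that it is precisely the vanishing of $H^{n+1}$ that forces surjectivity in degree $n$. Second, the finiteness hypothesis $\cd(G,H)<\infty$ is exactly what guarantees a largest degree $n$ at which some coefficient module has nonzero relative cohomology, thereby providing a top dimension at which the shift can be anchored.
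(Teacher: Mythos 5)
Your proof is correct and is essentially the paper's own argument: both take a module $M$ realizing the top dimension $n=\cd(G,H)$, surject a free module $F$ onto it with kernel $K$, and apply the long exact sequence of Lemma \ref{LES} to $0\to K\to F\to M\to 0$, using $H^{n+1}(G,H;K)=0$ to get a surjection $H^{n}(G,H;F)\twoheadrightarrow H^{n}(G,H;M)\neq 0$. The only differences are presentational: you make explicit the easy inequality and the reason the sequence terminates in a surjection (and you correctly note that the connecting map must raise degree, as in the displayed sequence of Lemma \ref{LES}, whose stated target $H^{n-1}(G,H;M')$ is a typo), points the paper leaves implicit.
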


\begin{proof}
    Let $M$ be a $RG$-module such that $H^{n}(G,H;M)\neq 0$ for $n=\textrm{cd}_{R}(G,H)$. Let $F$ be a free module such that $F$ admits a surjective $RG$-module morphism onto $M$. Let $K$ denote the kernel of this morphism. 
    
    Consider the long exact sequence obtained by applying Lemma \ref{LES} to the exact sequence $0\rightarrow K\rightarrow F \rightarrow M \rightarrow  0 $ :
    $$\dots\rightarrow H^{n}(G,H;K)\rightarrow H^{n}(G,H;F)\rightarrow H^{n}(G,H;M) \rightarrow 0.$$ 
    Since $ H^{n}(G,H;M)\neq 0$, it follows that $ H^{n}(G,H;F)\neq 0$.   
\end{proof}

We will also need the following result in order to prove Corollary \ref{main_corollary}:

\begin{lemma}\label{commute}\cite[Prop. 4.6]{brown2012cohomology}
 If a $RG$-module $M$ admits a finite free resolution such that each term is a finitely generated $RG$-module, then $\textrm{Ext}_{G}^{\ast}(M,\cdot)$ commutes with direct limits.    
\end{lemma}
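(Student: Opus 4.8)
The plan is to compute $\mathrm{Ext}^{*}_{G}(M,-)$ directly from the given resolution and to reduce the statement to two facts: that $\mathrm{Hom}_{G}$ of a \emph{finitely generated} free module carries direct limits to direct limits, and that filtered colimits of abelian groups are exact. Fix a finite free resolution
\[
0 \to F_{n} \to \cdots \to F_{1} \to F_{0} \to M \to 0
\]
in which each $F_{i}$ is a finitely generated free $\mathbb{Z}G$-module. For any $G$-module $A$ the groups $\mathrm{Ext}^{k}_{G}(M,A)$ are, by definition, the cohomology groups of the cochain complex $\mathrm{Hom}_{G}(F_{\bullet},A)$. Given a directed system $\{A_{\lambda}\}_{\lambda\in\Lambda}$ of $G$-modules with direct limit $A=\varinjlim_{\lambda} A_{\lambda}$, the functoriality of $\mathrm{Ext}^{k}_{G}(M,-)$ supplies a canonical comparison map $\varinjlim_{\lambda}\mathrm{Ext}^{k}_{G}(M,A_{\lambda})\to \mathrm{Ext}^{k}_{G}(M,\varinjlim_{\lambda}A_{\lambda})$, and the goal is to show this map is an isomorphism in every degree.

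The first ingredient I would record is elementary but is exactly where the finiteness hypothesis enters: if $F\cong(\mathbb{Z}G)^{m}$ is finitely generated free, then there is a natural isomorphism $\mathrm{Hom}_{G}(F,A)\cong A^{m}$, functorial in $A$. A finite direct sum of copies of the identity functor commutes with direct limits, so $A\mapsto \mathrm{Hom}_{G}(F,A)$ commutes with direct limits. (For an infinitely generated free module the analogous functor would be an infinite product, which does \emph{not} commute with direct limits, so finite generation of each $F_{i}$ is essential here.) Applying this in each degree, together with naturality in the differentials of $F_{\bullet}$, I would obtain an isomorphism of cochain complexes
\[
\mathrm{Hom}_{G}\!\left(F_{\bullet},\,\varinjlim_{\lambda}A_{\lambda}\right)\;\cong\;\varinjlim_{\lambda}\,\mathrm{Hom}_{G}(F_{\bullet},A_{\lambda}),
\]
where the direct limit on the right is formed degreewise.

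It then remains to commute the direct limit past the formation of cohomology. Here I would invoke the standard fact that the direct limit over a directed set is an exact functor on the category of abelian groups, so that it commutes with the passage to kernels, images, and hence to cohomology of a cochain complex. Combining the two ingredients gives, for every $k$,
\begin{align*}
\mathrm{Ext}^{k}_{G}\!\left(M,\varinjlim_{\lambda}A_{\lambda}\right)
&= H^{k}\!\left(\mathrm{Hom}_{G}\!\left(F_{\bullet},\varinjlim_{\lambda}A_{\lambda}\right)\right)
\;\cong\; H^{k}\!\left(\varinjlim_{\lambda}\mathrm{Hom}_{G}(F_{\bullet},A_{\lambda})\right)\\
&\cong\; \varinjlim_{\lambda}H^{k}\!\left(\mathrm{Hom}_{G}(F_{\bullet},A_{\lambda})\right)
\;=\;\varinjlim_{\lambda}\mathrm{Ext}^{k}_{G}(M,A_{\lambda}),
\end{align*}
and tracing through the construction shows these isomorphisms realize the canonical comparison map above.

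The only genuinely delicate point is the first ingredient, namely the verification that $\mathrm{Hom}_{G}(F_{i},-)$ commutes with direct limits; this is precisely the step that breaks without the finite generation of the terms $F_{i}$, and it is the reason the hypothesis of a resolution by finitely generated frees cannot be dropped. The finite length of the resolution is a convenience (it makes the complex bounded) but is not strictly needed, since the argument above is carried out degreewise and each cohomology group in a fixed degree involves only three consecutive finitely generated free terms. The exactness of filtered colimits is a routine homological fact that I would cite rather than reprove.
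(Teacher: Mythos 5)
Your proof is correct and is essentially the standard argument for this statement: the paper itself offers no proof but cites Brown's \emph{Cohomology of Groups}, where Proposition~VIII.4.6 is proved exactly as you do, by observing that $\mathrm{Hom}_{G}((\mathbb{Z}G)^{m},-)\cong(-)^{m}$ commutes with direct limits and that filtered colimits of abelian groups are exact, hence commute with cohomology. Your closing remark is also accurate --- finite generation of the terms (type $FP_{\infty}$) is the essential hypothesis, while finite length of the resolution is merely a convenience --- so there is nothing to correct.
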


\begin{proof}[Proof of Corollary \ref{main_corollary}:]
    Following Lemma \ref{free_module}, let $F$ be a free $RG$-module such that 
    $$\cd_{R}(G,H) = \sup \{ i \mid H^{i}(G,H;F)\neq 0 \}.$$ 
    Since $F$ is a free $RG$-module, it is isomorphic to a direct sum $\bigoplus_{i\in I} RG$ of copies of $\mathbb{Z}G$. By Lemma \ref{commute}, 
    $$H^{i}(G,H;F)\cong \bigoplus _{i\in I} H^{i}(G,H;RG).$$
    Thus, $H^{i}(G,H;F)\neq 0$ if and only if $H^{i}(G,H;RG)\neq 0$. 
\end{proof}

\section{Application to QI classification of relatively hyperbolic groups}\label{section_4}
In this Section we give a proof of Theorem \ref{qi_theorem} using the results proven in the previous section.

\begin{proposition}
    Let $G$ and $G'$ be two groups such that $G$ is hyperbolic with respect to a subgroup $H < G$ and $G'$ is relatively hyperbolic with respect to a subgroup $H'$. Let $q: G' \to G$ be a quasi-isometry of groups. Assume that both $H$ and $H'$ are unconstricted. Then, the image of any left coset $gH$ under $q$ is within a bounded Hausdorff distance of some coset $g'H'$ of $H'$.
    
    In other words, $q$ is a quasi-isometry between pairs $(G,H)$ and $(G',H')$.  
\end{proposition}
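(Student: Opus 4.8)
The plan is to exploit the structure theory of relatively hyperbolic groups as developed by Drutu--Sapir, for which the key geometric object is the collection of \emph{peripheral cosets} viewed inside an asymptotic cone. Recall that in an asymptotic cone of a relatively hyperbolic group, the ultralimits of the peripheral cosets appear as a collection of closed subsets (the ``pieces''), and by the tree-graded structure any two distinct pieces intersect in at most a point, while every simple geodesic triangle and every cut-point is contained in a single piece. The unconstricted hypothesis on $H$ and $H'$ is precisely what guarantees that the ultralimits of the cosets $gH$ and $g'H'$ are themselves without cut-points, so that each such ultralimit is forced to lie inside a single piece of the tree-graded structure of the cone.

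First I would fix a non-principal ultrafilter and a scaling sequence, and pass to asymptotic cones $\mathrm{Cone}(G')$ and $\mathrm{Cone}(G)$; the quasi-isometry $q$ induces a bi-Lipschitz homeomorphism $\bar q$ between them that respects the tree-graded structure (Drutu--Sapir show the collection of pieces is preserved up to finite Hausdorff error). Next I would observe that for a fixed coset $gH$, its ultralimit $L = \lim_\omega gH$ is a subset of $\mathrm{Cone}(G')$ which, by unconstrictedness of $H$, contains no cut-point of itself. Since $\bar q$ is bi-Lipschitz and takes pieces to pieces, the image $\bar q(L)$ is again a connected cut-point-free subset of $\mathrm{Cone}(G)$, and a cut-point-free subset of a tree-graded space must be contained in a single piece. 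Translating this back through the definition of an asymptotic cone, the statement ``$\bar q(L)$ lies in a single piece'' becomes the quantitative assertion that, along the ultrafilter, $q(gH)$ is within sublinear---and after the usual argument, uniformly bounded---Hausdorff distance of a single peripheral coset $g'H'$ of $G$.

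The remaining work is to upgrade the asymptotic statement to a genuine uniform bound. Here I would argue by contradiction: if no uniform $M$ worked, I could choose a sequence of cosets $g_nH$ whose $q$-images failed to be $n$-close to any single coset $g'H'$; feeding the basepoints $g_n$ and the divergence scale into the asymptotic cone construction would produce, in the limit, a cut-point-free piece whose image straddles two distinct pieces of the target cone, contradicting the tree-graded rigidity established above. This scaling-and-contradiction step is the main obstacle, because it requires care in simultaneously choosing basepoints, scaling constants, and the ultrafilter so that the relevant cosets survive as nondegenerate subsets of the cone; the cleanest route is to invoke the Drutu--Sapir machinery directly in the form that peripheral cosets are characterized (up to bounded Hausdorff distance) as the maximal unconstricted quasiconvex subsets, so that $q$ must carry this intrinsically-defined family to itself. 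Once the uniform bound is in hand, conditions (1) and (2) of the definition of a quasi-isometry of pairs follow immediately from the fact that $q$ is a quasi-isometry with quasi-inverse, and symmetry of the argument applied to $q^{-1}$ supplies the reverse containment.
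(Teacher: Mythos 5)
Your overall strategy---Drutu--Sapir's tree-graded machinery plus unconstrictedness---is the same circle of ideas the paper uses, but two steps in your write-up are genuine gaps rather than routine details. The first is the uniformity upgrade, which you yourself flag as "the main obstacle." Unconstrictedness only guarantees that \emph{one} asymptotic cone of $H$ (one ultrafilter, one scaling sequence) has no cut-points. In your contradiction argument the scaling sequence is dictated by the putative counterexamples, so you have no right to assume that the cone of the coset taken with respect to \emph{that} scaling is cut-point-free; this is precisely the delicate point in Drutu--Sapir's proof of their own quantitative statement, and it cannot be dismissed by "feeding the divergence scale into the asymptotic cone construction." Your proposed fallback---that peripheral cosets are "characterized as the maximal unconstricted quasiconvex subsets"---is not available in the literature in that form; what is available, and what the paper cites directly, is \cite[Corollary 5.8]{DRUTU2005959}, which already gives the uniform constant $R$ (depending only on the quasi-isometry constants) such that the image of each coset lies in $N_R$ of a single peripheral coset, with no cone argument needed on your part.

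The second gap is more serious: your final claim that "conditions (1) and (2) follow immediately" and that "symmetry of the argument applied to $q^{-1}$ supplies the reverse containment" is wrong as stated. The two one-sided containments $q(g'H') \subseteq N_R(gH)$ and $\bar q(gH) \subseteq N_{R'}(g''H')$ do not by themselves give bounded Hausdorff distance, because a priori the coset $g''H'$ produced by the quasi-inverse need not be the coset $g'H'$ you started with; if it were not, $q(g'H')$ could occupy an arbitrarily small portion of the neighborhood of $gH$. Forcing the matching $g''H' = g'H'$ is the heart of the paper's proof: the infinite set $\bar q \circ q(g'H')$ lies in a bounded neighborhood of $g'H'$ (since $\bar q \circ q$ is boundedly close to the identity) and also in a bounded neighborhood of $g''H'$, so if these cosets were distinct, the uniform bound on the diameter of $N_r(g_1H') \cap N_r(g_2H')$ for distinct cosets \cite[Theorem 4.1]{DRUTU2005959} would be violated. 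Only after this identification does one get $gH \subseteq N_{D+LR'+C}(q(g'H'))$ and hence the Hausdorff bound. You mention in your first paragraph that distinct pieces of a tree-graded space meet in at most a point, but you never deploy this almost-disjointness for the matching step, and without it the argument does not close.
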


\begin{proof}
    Let $S$ be a symmetric generating set for $G$, and let $d_{S}$ denote the corresponding word metric on $G$. Similarly, let $S'$ be a symmetric generating set for $G'$, and let $d_{S'}$ be the corresponding word metric on $G'$. Let $L \geq 1$ and $C > 0$ be such that $q: (G',d_{S'}) \to (G,d_{S})$ is an $(L,C)$ quasi-isometry. Let $\bar{q}: G \to G'$ denote the quasi-inverse of $q$.  
    
    Let $\mathcal{H}$ denote the collection $\{gH \mid g \in G\}$ of all left cosets of $H$. Then, $G$ is asymptotically tree-graded with respect to $\mathcal{H}$. It follows from \cite[Corollary 5.8]{DRUTU2005959} that the image $q(g'H')$ of any coset is contained in the $R$-neighborhood $N_{R}(gH)$ of some coset $gH \in \mathcal{H}$ for some constant $R$ that depends only on $L$, $C$, $G$, and $S$. Similarly, the image $\bar{q}(gH)$ is contained in the $R'$-neighborhood of some coset $g''H'$.  
    
    As $d_{S'}(x, \bar{q} \circ q(x)) \leq D$ for all $x \in G'$, we have that the infinite set $\bar{q} \circ q(g'H')$ is contained in the set $N_{M}(g'H') \cap N_{M}(g''H')$, where $M = \max\{D, R'\}$. Since $G'$ is asymptotically tree-graded with respect to $\mathcal{H'}$, the collection of all left cosets of $H'$, the diameter of $N_{r}(g_{1}H') \cap N_{r}(g_{2}H')$ is uniformly bounded for any two distinct cosets $g_{1}H'$ and $g_{2}H'$ of $H'$ \cite[Theorem 4.1]{DRUTU2005959}. It follows that $g'H' = g''H'$. 
    
    Now, let $y \in gH$. Then, there exists a point $x \in g'H'$ such that $d_{S}(\bar{q}(y), x) \leq R'$. Thus,
    \[
        d_{S}(q(x), y) \leq d_{S}(q \circ \bar{q}(y), y) + d_{S}(q \circ \bar{q}(y), q(x)) \leq D + L R' + C.
    \]
    Thus, \( gH \subset N_{D+LR'+C}(q(g'H')) \). It follows that the Hausdorff distance between $q(g'H')$ and $gH$ is finite. 
\end{proof}

\begin{proof}[Proof of Theorem \ref{qi_theorem}:]
    By Proposition 4.1 and Theorem \ref{groff}, the Bowditch boundaries \( \partial_{B}(G,H) \) and \( \partial_{B}(G',H') \) are homeomorphic. By Theorem \ref{manningwang}, the relative cohomology \( H^{i}(G,H;RG) \) is isomorphic to the \v{C}ech cohomology of the Bowditch boundary \( \check{H}^{i-1}(\partial_{B}(G,H);R)\). This implies that the two cohomology groups \( H^{i}(G,H;RG) \) and \( H^{i}(G',H';RG') \) are isomorphic for each $i \geq 0$. The result now follows from Corollary \ref{main_corollary}.
\end{proof}
\section{Examples}\label{section_examples}
In this section, we give examples of relatively hyperbolic pairs $(G,H)$ for which $\textrm{cd}_{\mathbb{Z}}(G,H)$ can be computed explicitly.

\begin{example}\label{multi-ended}
Let $H$ be a finitely generated group, and let $H'$ be a hyperbolic group of cohomological dimension $n$, where $n > 2$. Let $G = H \ast H'$. Then $G$ is relatively hyperbolic with respect to $H$. 

Let $Y$ be a $K(H,1)$ space, and let $Y'$ be an $n$-dimensional $K(H',1)$ space. Let $X = Y \vee Y'$ be the wedge sum of $Y$ and $Y'$. Then $\pi_{1}(X) = G$, and the inclusion $Y \hookrightarrow X$ induces the inclusion $H \hookrightarrow G$. Thus, $(X,Y)$ is an Eilenberg–MacLane pair for $(G,H)$. 

Applying Lemma~\ref{classifyingpairlemma1} to the pair $(X,Y)$, we get that $\cd_{R}(G,H) \leq n$.
\end{example}

\begin{example}\label{manifold}
Let $G$ denote the fundamental group of a cusped hyperbolic $n$-manifold with one cusp, and let $H$ denote the cusp subgroup. Then $G$ is relatively hyperbolic with respect to $H$. 

Furthermore, $(G,H)$ forms a relative $PD(n)$ pair \cite[Theorem 6.3]{BIERI1978277}; that is,
\begin{equation}
H^{i}(G,H,\mathbb{Z}G) \cong 
    \begin{cases*}
      \mathbb{Z} & if $i=n$, \\
      0          & otherwise.
    \end{cases*}
\end{equation}
In particular, $\cd_{\mathbb{Z}}(G,H) = n$.
\end{example}

Let $G$ be the fundamental group of a cusped hyperbolic 3-manifold with a single cusp, and let $H \cong \mathbb{Z}^{2}$ be the cusp subgroup. Then $\cd(G,H) = 3$. Let $G' = \mathbb{Z}^2\ast \pi_{1}(S_{g})$, where $\pi_{1}(S_{g})$ denotes the fundamental group of a closed surface of genus $g \geq 2$. Then $G'$ is relatively hyperbolic with respect to the $\mathbb{Z}^{2}$ subgroup, and by Example~\ref{multi-ended}, $\cd_{R}(G', \mathbb{Z}^{2}) = 2$. Note that both $G$ and $G'$ have (absolute) cohomological dimension equal to 2. Thus, one can use Theorem~\ref{qi_theorem} to show that $G$ and $G'$ are not quasi-isometric. 

$G'$ is multi-ended, while $G$ is one-ended. Hence, applying Theorem~\ref{qi_theorem} is somewhat overkill. It is instructive to look at an example of a one-ended group that is relatively hyperbolic with respect to a single $\mathbb{Z}^{2}$ subgroup and has relative cohomological dimension equal to 2.

\begin{example}\label{jsj}
Let $H_{1} = \mathbb{Z}^{2} = \langle a, b \mid [a, b] \rangle$, and let $H' = \pi_{1}(S_{2})$ be the fundamental group of a closed genus-2 surface endowed with a hyperbolic metric. Let $c$ be a simple closed geodesic curve, and let $\gamma$ be the corresponding element in $H'$. Let $G_{1}$ denote the amalgamated product $H_{1} \ast_{\mathbb{Z}} H'$ obtained by identifying the subgroups $\langle a \rangle$ and $\langle \gamma \rangle$. Then $G_{1}$ is relatively hyperbolic with respect to $H_{1}$.

A relative classifying space for the pair $(G_{1}, H_{1})$ can be constructed as follows. Let $Y$ denote the 2-torus, and let $Y'$ denote the genus-2 surface. Let $X$ denote the space obtained by gluing the meridian of $Y$ to the curve $c$. Then $\pi_{1}(X) \cong G_{1}$. Both $Y$ and $Y'$ admit a CAT(0) metric, and the gluing can be performed in such a way that the subspaces are convex and the gluing map is an isometry. As a result, $X$ can be endowed with a CAT(0) metric; in particular, its universal cover is contractible. Thus, $(X,Y)$ is an Eilenberg–MacLane pair for the pair $(G_{1}, H_{1})$. Using Lemmas~\ref{classifyingpairlemma2} and \ref{classifyingpairlemma1}, we have that $\cd_{\mathbb{Z}}(G_{1}, H_{1}) = 2$.
\end{example}

Thus, one can use Theorem~\ref{qi_theorem} to distinguish between $G$ and $G_{1}$. Note that $G_{1}$ splits as an amalgam over the cyclic subgroup $\mathbb{Z} = \langle \gamma \rangle$, whereas $G$ does not split over any of its 2-ended subgroups. It is known that splittings over 2-ended subgroups are preserved under quasi-isometries \cite{papasogluwhyte}. 

We now give another example of a group that is relatively hyperbolic with respect to a single $\mathbb{Z}^{2}$ subgroup and has relative cohomological dimension 2. The example below does not appear to have an obvious $\mathbb{Z}$-splitting, although we do not know of a way to determine whether it admits a $\mathbb{Z}$-splitting.

\begin{example}
Let $H_{2} = \mathbb{Z}^{2} = \langle a, b \mid [a, b] \rangle$, and let $H'$ be the genus-2 surface group. Let $F = H \ast H'$ be their free product. Choose $r \in F$ such that $r$ is not conjugate into either $H$ or $H'$, and let $G_{2} = F/\langle\langle r \rangle\rangle$. Then the quotient map $q: F \to G_{2}$ restricts to a monomorphism on both $H$ and $H'$. Furthermore, $r$ can be chosen so that $G_{2}$ is one-ended and satisfies the $C'_{*}(1/8)$ small-cancellation condition \cite[Section 4.4]{Cordes2016RelativelyHG}, which implies that $G_{2}$ is relatively hyperbolic with respect to $H_{2}$. 

Using a result of Howie \cite{howie}, we show that $\cd_{\mathbb{Z}}(G_{2}, H_{2}) = 2$. Note that both $H_{2}$ and $H'$ are locally indicable \cite[pg.~421, Theorem 1]{howie}. Let $\mathbb{T}^{2}$ denote the 2-torus, and let $S_{2}$ be a copy of the closed genus-2 surface. Let $X' = \mathbb{T}^{2} \vee S_{2}$, so that $\pi_{1}(X') \simeq F$. Let $\Phi_{r} : S^{1} \to X'$ be a continuous map along a closed path corresponding to the word $r$. Let $X$ be the CW-complex obtained by attaching a 2-cell along $\Phi_{r}$: that is, 
\[ X = \mathbb{D}^{2} \sqcup X'/(x \sim \Phi_{r}(x)). \] 
By the Van Kampen theorem, $\pi_{1}(X) = G_{2}$. By \cite[pg.~421, Theorem 1]{howie}, $X$ is aspherical. Thus, $(X, \mathbb{T}^{2})$ is an Eilenberg–MacLane pair for $(G_{2}, H_{2})$. By Lemma~\ref{classifyingpairlemma2}, $\cd_{\mathbb{Z}}(G_{2}, H_{2}) \leq 2$. It is not difficult to see that $H^{2}(X, \mathbb{T}^{2}) \neq 0$, and consequently, $\cd_{R}(G_{2}, H_{2}) = 2$.
\end{example}

\begin{example}
For any two elements $x, y \in G$ of a group $G$ and $m \in \mathbb{N}$, define $(x, y)_{m}$ to be the word of length $m$ given by
\[
(x,y)_{m} =
    \begin{cases*}
      (xy)^{\frac{m-1}{2}} \cdot x & if $m$ is odd, \\
      (xy)^{\frac{m}{2}}           & if $m$ is even.
    \end{cases*}
\]
Any two-generator Artin group, given by the presentation $\langle x, y \mid (x,y)_{m} = (y,x)_{m} \rangle$, is unconstricted \cite[Example 10.1]{behrstockdrutumosher}. In this final example, we show how to construct, for each $n = 4k, k \in \mathbb{N}$, a group $G_{n}$ such that:
\begin{enumerate}
    \item $G_{n}$ is relatively hyperbolic with respect to a subgroup $H_{n}$ that is isomorphic to a two-generator Artin group;
    \item $G_{n}$ has property (T);
    \item $\cd_{\mathbb{Z}}(G_{n}, H_{n}) = n$.
\end{enumerate}
Property (T) implies that $G_{n}$ does not split as an amalgam or HNN extension.

Let $n=4k$ and let $\Gamma_{n}$ be a hyperbolic group of cohomological dimension $n$ with property (T). For example, one can take $\Gamma_{n}$ to be a cocompact lattice in $\textrm{Sp}(k,1)$ for $n=4k$. Let $K$ be a quasi-convex free subgroup of $\Gamma_{n}$ of rank 2, and let $a, b \in K$ such that $a$ and $b$ freely generate $K$. Then $\Gamma_{n}$ is relatively hyperbolic with respect to $K$.
\\
\textit{Claim 1: } $\cd(\Gamma_{n}, K) = n$.
\begin{proof}
Since $\cd(\Gamma_{n}) = n$, we have $H^{n}(\Gamma_{n}; \mathbb{Z}\Gamma_{n}) \neq 0$. As $K$ is a free group, $\cd(K) = 1$, so $H^{i}(K; \mathbb{Z}\Gamma_{n}) = 0$ for all $i > 1$. Using the long exact sequence in Proposition~\ref{LES_pair}, we obtain $H^{i}(\Gamma_{n}, K; \mathbb{Z}\Gamma_{n}) \simeq H^{i}(\Gamma_{n}; \mathbb{Z}\Gamma_{n})$ for all $i > 2$. The result now follows by Theorem~\ref{maintheorem1}.
\end{proof}

Let $F$ be a finite subset of $\Gamma_{n}$ such that the conclusion of Theorem~\ref{dehn_filling} holds. We make the following claim:

\textit{Claim 2: } For sufficiently large $m$, $F \cap \langle\langle (a,b)_{m} \cdot (b,a)_{m}^{-1} \rangle\rangle = \emptyset$.
\begin{proof}
Let $S = \{a, b\}$ and let $|\cdot|_{S}$ denote word length in $K$ with respect to $S$. Let $D$ be the diameter of $F \cap K$. It suffices to show that for large enough $m$, any $g \in \langle\langle r \rangle\rangle$ has word length greater than $D$. This follows from the inequalities
\[
|grg^{-1}|_{S} \geq |r|_{S} \geq 4m - 2.
\]
\end{proof}

Let $N = \langle\langle r \rangle\rangle_{K}$ be the normal closure of $\langle r \rangle$ in $K$, and let $\langle\langle N \rangle\rangle$ be its normal closure in $\Gamma_{n}$. Claim 2 shows that $\langle\langle r \rangle\rangle$ satisfies the assumptions of Theorem~\ref{dehn_filling}. It follows that $G_{n} = \Gamma_{n}/\langle\langle N \rangle\rangle$ is relatively hyperbolic with respect to $H_{n} = K/N$. Since $G_{n}$ is a quotient of a group with property (T), it also has property (T).

\textit{Claim 3: } $\cd(G_{n}, H_{n}) = n$.
\begin{proof}
By Theorem~\ref{binsun}, we have:
\[
H^{i}(G_{n}, H_{n}; \mathbb{Z}G_{n}) \cong H^{i}(\Gamma_{n}, K; \mathbb{Z}G_{n}).
\]
Since $\cd(\Gamma_{n}, K) = n$, the right-hand side vanishes for all $i > n$. By Corollary~\ref{main_corollary}, it suffices to show that $H^{n}(G_{n}, H_{n}; \mathbb{Z}G_{n}) \neq 0$. Let $q: \mathbb{Z}G_{n} \to \mathbb{Z}$ be the augmentation map sending each generator to 1, and let $M = \ker q$. Using the long exact sequence from Lemma~\ref{LES}, we get a surjective morphism:
\[
q: H^{n}(\Gamma_{n}, K; \mathbb{Z}G_{n}) \to H^{n}(\Gamma_{n}, K; \mathbb{Z}).
\]
Since $H^{n}(\Gamma_{n}, K; \mathbb{Z}) \neq 0$, it follows that $H^{n}(G_{n}, H_{n}; \mathbb{Z}G_{n}) \neq 0$.
\end{proof}
\end{example}

\section{Multiple Peripherals}
The analogous statement of Theorem \ref{maintheorem1} holds and the the proof has only
superficial changes. 
\begin{theorem} Let $R$ be a ring.
Let $G$ be a group without $R$-torsion and $\mathcal{P}=\{P_{1},\dots, P_{n}\}$ be a finite collection of subgroups of $G$ such that $G$ is relatively hyperbolic with respect to $\mathcal{P}$. Then, the $RG$-module $\Delta_{G/\mathcal{P}}^{R}$ is a module of type $FP$ over $\mathbb{Z}G$. As a result, $\cd_R(G,\mathcal{P})$ is finite. 
\end{theorem}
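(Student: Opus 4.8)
The plan is to reproduce, essentially verbatim, the three-step argument used for Theorem \ref{maintheorem1}, replacing the single coset space $G/H$ by the disjoint union $\bigsqcup_{i=1}^{n} G/P_{i}$ throughout. First I would cone off every coset $gP_{i}$ of every peripheral $P_{i}$ and define the relative Rips complex $\tilde{X}_{r,d}$ exactly as in Definition \ref{relativeRips}: its $0$-skeleton is $G$, two vertices are joined by an edge when they lie in a common coset of some $P_{i}$ or are connected by a relative geodesic that is short and penetrates each coset for a bounded amount, and higher simplices are filled in by the flag condition. The subcomplex $\tilde{Y}$ is then the union of all $G$-translates of the full simplices $\tilde{K}_{i}$ on the vertex sets $P_{i}$, $1 \le i \le n$; each connected component of $\tilde{Y}$ is the full (infinite) simplex on a single coset $gP_{i}$, hence contractible.

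Next I would verify the two finiteness-and-acyclicity propositions in the present setting. The acyclicity of $\tilde{X}_{r,d}$, the analogue of Proposition \ref{acyclic}, goes through unchanged: writing $\tilde{X}_{r,d}=\bigcup_{s}\tilde{X}_{r,d,s}$ and invoking the cycle-decomposition of \cite{patil2023}, any $n$-cycle splits as a sum of cycles each supported on a single coset of some $P_{i}$, and each such cycle bounds because the coset spans a contractible full simplex. For the finiteness statement, the analogue of Proposition \ref{eilenbergmclane}, the same two observations apply: a simplex of $\tilde{X}_{r,d}$ not contained in $\tilde{Y}$ has all its vertices within a fixed ball around any one of them, which bounds its dimension and, after passing to $G$-orbits, shows that $C_{\ast}(\tilde{X}_{r,d},\tilde{Y})$ has finite length with each nontrivial term finitely generated over $\mathbb{Z}G$. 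To identify the bottom homology I would run the long exact sequence of the pair as before: since $\tilde{X}_{r,d}$ is connected, $H_{0}(\tilde{X}_{r,d})\cong\mathbb{Z}$, while the components of $\tilde{Y}$ are indexed by $\bigsqcup_{i} G/P_{i}$, so $H_{0}(\tilde{Y})\cong\bigoplus_{i=1}^{n}\mathbb{Z}[G/P_{i}]$ and the inclusion-induced map is the augmentation $\epsilon$. The long exact sequence then gives $H_{0}(\tilde{X}_{r,d},\tilde{Y})\cong\ker\epsilon=\Delta_{G/\mathcal{P}}$. Because $G$ is torsion-free it acts freely on the simplices lying outside $\tilde{Y}$, so each $C_{i}(\tilde{X}_{r,d},\tilde{Y})$ is finitely generated free; appending $\Delta_{G/\mathcal{P}}$ to this finite chain complex produces the desired type-$FP$ resolution, whence $\cd(G,\mathcal{P})<\infty$.

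The only genuine point to check, and the place where I expect the main (and essentially routine) obstacle, is that the combinatorial inputs borrowed from \cite{patil2023}, namely the cycle decomposition and the bounded-coset-penetration estimates governing the edge relation, remain valid for a finite collection of peripheral subgroups rather than a single one. This is not a real difficulty: the theory of relatively hyperbolic groups, and the constructions of \cite{patil2023} in particular, are framed for finite families of peripherals from the outset, so the hyperbolicity constant $\delta$, the penetration function $r_{BCP}$, and the parameters $d$ and $r$ may be chosen uniformly over the finite set $\mathcal{P}$, and every estimate used in the analogues of Propositions \ref{acyclic} and \ref{eilenbergmclane} holds verbatim. For this reason the proof is, as claimed, identical to that of Theorem \ref{maintheorem1}.
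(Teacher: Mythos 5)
Your proposal is correct and is exactly the paper's approach: the paper's entire proof of this theorem is the remark that the argument for Theorem \ref{maintheorem1} goes through verbatim with the single peripheral replaced by the finite collection $\mathcal{P}$, which is precisely what you carry out (relative Rips complex over all cosets of all $P_{i}$, acyclicity via the cycle decomposition, finiteness of the relative chain complex, and identification of $H_{0}(\tilde{X}_{r,d},\tilde{Y})$ with $\Delta_{G/\mathcal{P}}$). Your explicit note that torsion-freeness gives a free action on simplices outside $\tilde{Y}$, hence free chain modules, fills in a detail the paper leaves implicit even in the single-peripheral case.
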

The analogous statement for Theorem \ref{qi_theorem} involves a few subtleties. Suppose we are given two quasi-isometric groups $G$ and $G'$ such that $G$ is relatively hyperbolic with respect to a finite collection of unconstricted subgroups $\mathcal{P}$. Using results from \cite{behrstockdrutumosher} one obtains a finite collection $ \mathcal{P}'$ of $G'$ such that each subgroup in $\mathcal{P}$ is quasi-isometric to some subgroup in $\mathcal{P}'$. Moreover, the quasi-isometry $q$ between $G$ and $G'$ induces a quasi-isometry of pairs $(G,\mathcal{P})$ and $(G',\mathcal{P'})$. If one \textit{a priori} knows a collection $\mathcal{Q}$ of unconstricted subgroups with respect to which $G'$ is relatively hyperbolic, we need to ensure that $\mathcal{P}'$ and $\mathcal{Q}$ are the same (up to quasi-isometry). This ambiguity can be resolved as follows: Consider the quasi-isometry $i:G'\rightarrow G'$ induced by the identity map. Since both collections $\mathcal{P}'$ and $\mathcal{Q}$ are unconstricted, $i$ has to induce a quasi-isometry between pairs $(G',\mathcal{P}')$ and $(G,\mathcal{Q})$. As a result, $\partial_{B}(G',\mathcal{P}')$ is homeomorphic to $\partial_{B}(G',\mathcal{Q})$.  We are now ready to state the correct analogue of Theorem \ref{qi_theorem}. 
\begin{theorem}
    Let $G$ and $G'$ be quasi-isometric groups that do not contain $R$-torsion, and suppose that $G$ is relatively hyperbolic with respect to a collection $\mathcal{P}$ of unconstricted subgroups of type $F_{\infty}$. Then, there exists a collection of type $F_{\infty}$, unconstricted subgroups $\mathcal{P}'$ of $G'$ such that the pairs $G'$ is relatively hyperbolic with respect to $G'$. Moreover, for any finite  collection $\mathcal{Q}$  of subgroups of $G'$ that are unconstricted, type $F_{\infty}$ and such that $G'$ is relatively hyperbolic with respect to $\mathcal{Q}$,  we have $\cd(G,\mathcal{P})=\cd(G',\mathcal{Q})$. 
\end{theorem}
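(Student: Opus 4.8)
The plan is to mirror the short proof of Theorem \ref{qi_theorem}, inserting two extra arguments: one producing the peripheral structure $\mathcal{P}'$ on $G'$, and one showing that the relative cohomological dimension is insensitive to the choice of unconstricted peripheral structure $\mathcal{Q}$. Throughout I take, as in the surrounding discussion, that $G$ and $G'$ are quasi-isometric via a map $q$, and I read the conclusion as ``$G'$ is relatively hyperbolic with respect to $\mathcal{P}'$.'' To obtain $\mathcal{P}'$, note that $G$ is asymptotically tree-graded with respect to the cosets of $\mathcal{P}$, so \cite[Theorem 4.8]{behrstockdrutumosher} applied to $q$ yields a finite collection $\mathcal{P}'$ of subgroups of $G'$ with respect to which $G'$ is relatively hyperbolic, with each $P\in\mathcal{P}$ quasi-isometric to some $P'\in\mathcal{P}'$ and with $q$ carrying each coset of $\mathcal{P}$ to within bounded Hausdorff distance of a coset of $\mathcal{P}'$. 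Since being unconstricted is an asymptotic-cone (hence quasi-isometry) invariant and type $F_{\infty}$ is also a quasi-isometry invariant, every member of $\mathcal{P}'$ is again unconstricted and of type $F_{\infty}$. By construction $q$ is a quasi-isometry of pairs $(G,\mathcal{P})\to(G',\mathcal{P}')$, so Theorem \ref{groff} gives a homeomorphism $\partial_{B}(G,\mathcal{P})\cong\partial_{B}(G',\mathcal{P}')$.

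For independence of the choice of $\mathcal{Q}$, let $\mathcal{Q}$ be any finite collection of unconstricted, type $F_{\infty}$ subgroups of $G'$ with respect to which $G'$ is relatively hyperbolic, and consider the identity quasi-isometry $i\colon G'\to G'$. Because both $\mathcal{P}'$ and $\mathcal{Q}$ consist of unconstricted subgroups, the multi-peripheral analogue of Proposition 4.1 applies to $i$: each coset of $\mathcal{P}'$ lies within bounded Hausdorff distance of a coset of $\mathcal{Q}$ and conversely, so $i$ is a quasi-isometry of pairs $(G',\mathcal{P}')\to(G',\mathcal{Q})$. Theorem \ref{groff} then yields $\partial_{B}(G',\mathcal{P}')\cong\partial_{B}(G',\mathcal{Q})$, and combining with the previous paragraph gives $\partial_{B}(G,\mathcal{P})\cong\partial_{B}(G',\mathcal{Q})$.

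It remains to convert this homeomorphism into the equality of cohomological dimensions. Since $G$, $G'$, and every member of $\mathcal{P}$ and $\mathcal{Q}$ is of type $F_{\infty}$, the multi-peripheral form of Theorem \ref{manningwang} identifies $H^{i}(G,\mathcal{P};\mathbb{Z}G)$ with $\check{H}^{i-1}(\partial_{B}(G,\mathcal{P});\mathbb{Z})$ and $H^{i}(G',\mathcal{Q};\mathbb{Z}G')$ with $\check{H}^{i-1}(\partial_{B}(G',\mathcal{Q});\mathbb{Z})$; the homeomorphism of boundaries then forces $H^{i}(G,\mathcal{P};\mathbb{Z}G)\cong H^{i}(G',\mathcal{Q};\mathbb{Z}G')$ for every $i$. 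Because $G$ and $G'$ are torsion-free, the multi-peripheral analogue of Corollary \ref{main_corollary} reduces each relative cohomological dimension to the supremum of those $i$ for which the respective $\mathbb{Z}G$- (resp.\ $\mathbb{Z}G'$-) coefficient relative cohomology is nonzero, and since these two cohomology sequences coincide, the suprema agree, yielding $\cd(G,\mathcal{P})=\cd(G',\mathcal{Q})$.

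The main obstacle I anticipate is the uniqueness step: making rigorous that the identity map is a quasi-isometry of pairs $(G',\mathcal{P}')\to(G',\mathcal{Q})$. This is exactly where unconstrictedness is indispensable, since without it a single group may carry genuinely inequivalent relatively hyperbolic structures whose peripheral cosets are not mutually Hausdorff-close. The mechanism is Drutu--Sapir's rigidity of asymptotically tree-graded structures \cite[Theorem 4.1, Corollary 5.8]{DRUTU2005959}: an unconstricted subset of an asymptotically tree-graded space must lie in a bounded neighborhood of a single piece, so each (unconstricted) coset of $\mathcal{Q}$ is trapped in a neighborhood of a coset of $\mathcal{P}'$ and conversely. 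The delicate point to verify carefully is that these bounded-intersection estimates continue to force equality of cosets when one compares two a priori distinct peripheral structures on the \emph{same} group, rather than comparing a structure with its quasi-isometric image as in Proposition 4.1.
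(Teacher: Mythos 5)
Your proposal is correct and follows essentially the same route as the paper's own argument (given in the discussion preceding the theorem): obtain $\mathcal{P}'$ from \cite[Theorem 4.8]{behrstockdrutumosher}, resolve the ambiguity between $\mathcal{P}'$ and $\mathcal{Q}$ via the identity quasi-isometry $i\colon G'\to G'$ using unconstrictedness, and then conclude through Theorem \ref{groff}, the multi-peripheral form of Theorem \ref{manningwang}, and Corollary \ref{main_corollary}. The only difference is that you supply details the paper leaves implicit (quasi-isometry invariance of unconstrictedness and of type $F_{\infty}$, and the Drutu--Sapir mechanism behind the identity-map step), which strengthens rather than changes the argument.
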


\section*{Acknowledgments}
The author would like to thank David Hume and John Mackay for several helpful conversations. The author would like to thank the anonymous referee for their comments and suggestions on a previous draft of this paper.  
\bibliographystyle{alpha}
\bibliography{main}

\textsc{School of Mathematics, University of Bristol, Bristol, BS8 1UG}

\textit{E-mail address}: \texttt{cr22307@bristol.ac.uk}

\end{document}